\def\m@th{\mathsurround=0pt}
\def\fsquare(#1,#2){
\hbox{\vrule$\hskip-0.4pt\vcenter to #1{\normalbaselines\m@th
\hrule\vfil\hbox to #1{\hfill$\scriptstyle #2$\hfill}\vfil\hrule}$\hskip-0.4pt
\vrule}}
\def\addsquare(#1,#2){\hbox{$
	\dimen1=#1 \advance\dimen1 by -0.8pt
	\vcenter to #1{\hrule height0.4pt depth0.0pt%
	\hbox to #1{%
	\vbox to \dimen1{\vss%
	\hbox to \dimen1{\hss$\scriptstyle~#2~$\hss}%
	\vss}%
	\vrule width0.4pt}%
	\hrule height0.4pt depth0.0pt}$}}
\def\Fsquare(#1,#2){
\hbox{\vrule$\hskip-0.4pt\vcenter to #1{\normalbaselines\m@th
\hrule\vfil\hbox to #1{\hfill$#2$\hfill}\vfil\hrule}$\hskip-0.4pt
\vrule}}
\def\Addsquare(#1,#2){\hbox{$
	\dimen1=#1 \advance\dimen1 by -0.8pt
	\vcenter to #1{\hrule height0.4pt depth0.0pt%
	\hbox to #1{%
	\vbox to \dimen1{\vss%
	\hbox to \dimen1{\hss$~#2~$\hss}%
	\vss}%
	\vrule width0.4pt}%
	\hrule height0.4pt depth0.0pt}$}}
\def\hfourbox(#1,#2,#3,#4){%
	\fsquare(0.3cm,#1)\addsquare(0.3cm,#2)\addsquare(0.3cm,#3)\addsquare(0.3cm,#4)}
\def\Hfourbox(#1,#2,#3,#4){%
	\Fsquare(0.4cm,#1)\Addsquare(0.4cm,#2)\Addsquare(0.4cm,#3)\Addsquare(0.4cm,#4)}
\def\HHfourbox(#1,#2,#3,#4){%
	\Fsquare(0.8cm,#1)\Addsquare(0.8cm,#2)\Addsquare(0.8cm,#3)\Addsquare(0.8cm,#4)}
\def\fsq(#1){%
          \fsquare(0.3cm,#1)}
\def\hthreebox(#1,#2,#3){%
	\fsquare(0.3cm,#1)\addsquare(0.3cm,#2)\addsquare(0.3cm,#3)}
\def\htwobox(#1,#2){%
	\fsquare(0.3cm,#1)\addsquare(0.3cm,#2)}
\def\vfourbox(#1,#2,#3,#4){%
%	\hbox{
	\normalbaselines\m@th\offinterlineskip
	\vcenter{\hbox{\fsquare(0.3cm,#1)}
	      \vskip-0.4pt
	      \hbox{\fsquare(0.3cm,#2)}	
	      \vskip-0.4pt
	      \hbox{\fsquare(0.3cm,#3)}	
	      \vskip-0.4pt
	      \hbox{\fsquare(0.3cm,#4)}}}%}
\def\VVfourbox(#1,#2,#3,#4){%
%	\hbox{
	\normalbaselines\m@th\offinterlineskip
	\vcenter{\hbox{\Fsquare(0.8cm,#1)}
	      \vskip-0.4pt
	      \hbox{\Fsquare(0.8cm,#2)}	
	      \vskip-0.4pt
	      \hbox{\Fsquare(0.8cm,#3)}	
	      \vskip-0.4pt
	      \hbox{\Fsquare(0.8cm,#4)}}}%}
\def\Vfourbox(#1,#2,#3,#4){%
%	\hbox{
	\normalbaselines\m@th\offinterlineskip
	\vcenter{\hbox{\Fsquare(0.4cm,#1)}
	      \vskip-0.4pt
	      \hbox{\Fsquare(0.4cm,#2)}	
	      \vskip-0.4pt
	      \hbox{\Fsquare(0.4cm,#3)}	
	      \vskip-0.4pt
	      \hbox{\Fsquare(0.4cm,#4)}}}%}
\def\vthreebox(#1,#2,#3){%
%	\hbox{
	\normalbaselines\m@th\offinterlineskip
	\vcenter{\hbox{\fsquare(0.3cm,#1)}
	      \vskip-0.4pt
	      \hbox{\fsquare(0.3cm,#2)}	
	      \vskip-0.4pt
	      \hbox{\fsquare(0.3cm,#3)}}}%}
\def\vtwobox(#1,#2){%
%	\hbox{
	\normalbaselines\m@th\offinterlineskip
	\vcenter{\sbox{\fsquare(0.3cm,#1)}
	      \vskip-0.4pt
	      \hbox{\fsquare(0.3cm,#2)}}}%}
\def\vtwobox2(#1,#2){%
%	\hbox{
	\normalbaselines\m@th\offinterlineskip
	\vcenter{\hbox{#1}
	      \vskip-0.4pt
	      \hbox{\fsquare(0.3cm,#2)}}}%}
\def\Hthreebox(#1,#2,#3){%
	\Fsquare(0.4cm,#1)\Addsquare(0.4cm,#2)\Addsquare(0.4cm,#3)}
\def\HHthreebox(#1,#2,#3){%
	\Fsquare(0.8cm,#1)\Addsquare(0.8cm,#2)\Addsquare(0.8cm,#3)}
\def\Htwobox(#1,#2){%
	\Fsquare(0.4cm,#1)\Addsquare(0.4cm,#2)}
\def\H6twobox(#1,#2){%
	\Fsquare(0.6cm,#1)\Addsquare(0.6cm,#2)}
\def\HHtwobox(#1,#2){%
	\Fsquare(0.8cm,#1)\Addsquare(0.8cm,#2)}
\def\Vthreebox(#1,#2,#3){%
	\normalbaselines\m@th\offinterlineskip
	\vcenter{\hbox{\Fsquare(0.4cm,#1)}
	      \vskip-0.4pt
	      \hbox{\Fsquare(0.4cm,#2)}	
	      \vskip-0.4pt
	      \hbox{\Fsquare(0.4cm,#3)}}}
\def\Vtwobox(#1,#2){%
	\normalbaselines\m@th\offinterlineskip
	\vcenter{\hbox{\Fsquare(0.4cm,#1)}
	      \vskip-0.4pt
	      \hbox{\Fsquare(0.4cm,#2)}}}
\def\twoone(#1,#2,#3){%
	\normalbaselines\m@th\offinterlineskip
	\vcenter{\hbox{\htwobox({#1},{#2})}
	      \vskip-0.4pt
	      \hbox{\fsquare(0.3cm,#3)}}}
\def\twothree(#1,#2,#3,#4,#5){%
	\normalbaselines\m@th\offinterlineskip
	\vcenter{\hbox{\htwobox({#1},{#2})}
	      \vskip-0.4pt
	      \hbox{\hthreebox({#3},{#4},{#5})}}}
\def\threethree(#1,#2,#3,#4,#5,#6){%
	\normalbaselines\m@th\offinterlineskip
	\vcenter{\hbox{\hthreebox({#1},{#2},{#3})}
	      \vskip-0.4pt
	      \hbox{\hthreebox({#4},{#5},{#6})}}}
\def\threethreeone(#1,#2,#3,#4,#5,#6,#7){%
	\normalbaselines\m@th\offinterlineskip
	\vcenter{\hbox{\hthreebox({#1},{#2},{#3})}
	      \vskip-0.4pt
	      \hbox{\hthreebox({#4},{#5},{#6})}
              \vskip-0.4pt
              \hbox{\fsquare(0.3cm,#7)}}}
\def\threethreetwo(#1,#2,#3,#4,#5,#6,#7,#8){%
	\normalbaselines\m@th\offinterlineskip
	\vcenter{\hbox{\hthreebox({#1},{#2},{#3})}
	      \vskip-0.4pt
	      \hbox{\hthreebox({#4},{#5},{#6})}
              \vskip-0.4pt
              \hbox{\htwobox({#7},{#8})}}}
\def\Twoone(#1,#2,#3){%
	\normalbaselines\m@th\offinterlineskip
	\vcenter{\hbox{\Htwobox({#1},{#2})}
	      \vskip-0.4pt
	      \hbox{\Fsquare(0.4cm,#3)}}}
\def\TTwoone(#1,#2,#3){%
	\normalbaselines\m@th\offinterlineskip
	\vcenter{\hbox{\H6twobox({#1},{#2})}
	      \vskip-0.4pt
	      \hbox{\Fsquare(0.6cm,#3)}}}
\def\threeone(#1,#2,#3,#4){%
	\normalbaselines\m@th\offinterlineskip
	\vcenter{\hbox{\hthreebox({#1},{#2},{#3})}
	      \vskip-0.4pt
	      \hbox{\fsquare(0.3cm,#4)}}}
\def\Threeone(#1,#2,#3,#4){%
	\normalbaselines\m@th\offinterlineskip
	\vcenter{\hbox{\Hthreebox({#1},{#2},{#3})}
	      \vskip-0.4pt
	      \hbox{\Fsquare(0.4cm,#4)}}}
\def\Threetwo(#1,#2,#3,#4,#5){%
	\normalbaselines\m@th\offinterlineskip
	\vcenter{\hbox{\Hthreebox({#1},{#2},{#3})}
	      \vskip-0.4pt
	      \hbox{\Htwobox({#4},{#5})}}}
\def\threetwo(#1,#2,#3,#4,#5){%
	\normalbaselines\m@th\offinterlineskip
	\vcenter{\hbox{\hthreebox({#1},{#2},{#3})}
	      \vskip-0.4pt
	      \hbox{\htwobox({#4},{#5})}}}
\def\twotwo(#1,#2,#3,#4){%
	\normalbaselines\m@th\offinterlineskip
	\vcenter{\hbox{\htwobox({#1},{#2})}
	      \vskip-0.4pt
	      \hbox{\htwobox({#3},{#4})}}}
\def\Twotwo(#1,#2,#3,#4){%
	\normalbaselines\m@th\offinterlineskip
	\vcenter{\hbox{\Htwobox({#1},{#2})}
	      \vskip-0.4pt
	      \hbox{\Htwobox({#3},{#4})}}}
\def\TTwotwo(#1,#2,#3,#4){%
	\normalbaselines\m@th\offinterlineskip
	\vcenter{\hbox{\H6twobox({#1},{#2})}
	      \vskip-0.4pt
	      \hbox{\H6twobox({#3},{#4})}}}
\def\twooneone(#1,#2,#3,#4){%
	\normalbaselines\m@th\offinterlineskip
	\vcenter{\hbox{\htwobox({#1},{#2})}
	      \vskip-0.4pt
	      \hbox{\fsquare(0.3cm,#3)}
	      \vskip-0.4pt
	      \hbox{\fsquare(0.3cm,#4)}}}
\def\Twooneone(#1,#2,#3,#4){%
	\normalbaselines\m@th\offinterlineskip
	\vcenter{\hbox{\Htwobox({#1},{#2})}
	      \vskip-0.4pt
	      \hbox{\Fsquare(0.4cm,#3)}
	      \vskip-0.4pt
	      \hbox{\Fsquare(0.4cm,#4)}}}
\def\Twotwoone(#1,#2,#3,#4,#5){%
	\normalbaselines\m@th\offinterlineskip
	\vcenter{\hbox{\Htwobox({#1},{#2})}
	      \vskip-0.4pt
	      \hbox{\Htwobox({#3},{#4})}
              \vskip-0.4pt
	      \hbox{\Fsquare(0.4cm,#5)}}}
\def\twotwoone(#1,#2,#3,#4,#5){%
	\normalbaselines\m@th\offinterlineskip
	\vcenter{\hbox{\htwobox({#1},{#2})}
	      \vskip-0.4pt
	      \hbox{\htwobox({#3},{#4})}
              \vskip-0.4pt
	      \hbox{\fsquare(0.3cm,#5)}}}
\def\twotwotwo(#1,#2,#3,#4,#5,#6){%
	\normalbaselines\m@th\offinterlineskip
	\vcenter{\hbox{\htwobox({#1},{#2})}
	      \vskip-0.4pt
	      \hbox{\htwobox({#3},{#4})}
              \vskip-0.4pt
	      \hbox{\htwobox({#5},{#6})}}}
\def\threetwoone(#1,#2,#3,#4,#5,#6){%
	\normalbaselines\m@th\offinterlineskip
	\vcenter{\hbox{\hthreebox({#1},{#2},{#3})}
	      \vskip-0.4pt
	      \hbox{\htwobox({#4},{#5})}
              \vskip-0.4pt
	      \hbox{\fsquare(0.3cm,#6)}}}
\def\fourthreetwoone{%
	\normalbaselines\m@th\offinterlineskip
	\vcenter{\hbox{\hfourbox(,,,)}
	      \vskip-0.4pt
              \hbox{\hthreebox(,,)}
              \vskip-0.4pt
	      \hbox{\htwobox(,)}
              \vskip-0.4pt
	      \hbox{\fsquare(0.3cm,)}}}
\def\fourtwoone(#1,#2,#3,#4,#5,#6,#7){%
	\normalbaselines\m@th\offinterlineskip
	\vcenter{\hbox{\hfourbox({#1},{#2},{#3},{#4})}
	      \vskip-0.4pt
	      \hbox{\htwobox({#5},{#6})}
              \vskip-0.4pt
	      \hbox{\fsquare(0.3cm,{#7})}}}
\def\fourthreetwo(#1,#2,#3,#4,#5,#6,#7,#8,#9){%
	\normalbaselines\m@th\offinterlineskip
	\vcenter{\hbox{\hfourbox({#1},{#2},{#3},{#4})}
	      \vskip-0.4pt
              \hbox{\hthreebox({#5},{#6},{#7})}
              \vskip-0.4pt
	      \hbox{\htwobox({#8},{#9})}}}
\def\fourthreeone(#1,#2,#3,#4,#5,#6,#7,#8){%
	\normalbaselines\m@th\offinterlineskip
	\vcenter{\hbox{\hfourbox({#1},{#2},{#3},{#4})}
	      \vskip-0.4pt
              \hbox{\hthreebox({#5},{#6},{#7})}
              \vskip-0.4pt
	      \hbox{\fsquare(0.3cm,{#8})}}}
\def\TThreetwoone(#1,#2,#3,#4,#5,#6){%
	\normalbaselines\m@th\offinterlineskip
	\vcenter{\hbox{\HHthreebox({#1},{#2},{#3})}
	      \vskip-0.4pt
	      \hbox{\HHtwobox({#4},{#5})}
              \vskip-0.4pt
	      \hbox{\fsquare(0.8cm,#6)}}}
\def\Threeoneone(#1,#2,#3,#4,#5){%
	\normalbaselines\m@th\offinterlineskip
	\vcenter{\hbox{\Hthreebox({#1},{#2},{#3})}
	      \vskip-0.4pt
	      \hbox{\Fsquare(0.4cm,#4)}
              \vskip-0.4pt
	      \hbox{\Fsquare(0.4cm,#5)}}}
\def\threeoneone(#1,#2,#3,#4,#5){%
	\normalbaselines\m@th\offinterlineskip
	\vcenter{\hbox{\hthreebox({#1},{#2},{#3})}
	      \vskip-0.4pt
	      \hbox{\fsquare(0.3cm,#4)}
              \vskip-0.4pt
	      \hbox{\fsquare(0.3cm,#5)}}}
\def\FFourtwoone(#1,#2,#3,#4,#5,#6,#7){%
	\normalbaselines\m@th\offinterlineskip
	\vcenter{\hbox{\HHfourbox({#1},{#2},{#3},{#4})}
	      \vskip-0.4pt
	      \hbox{\HHtwobox({#5},{#6})}
              \vskip-0.4pt
	      \hbox{\fsquare(0.8cm,#7)}}}
\def\a{\fsquare(0.3cm){1}\addsquare(0.3cm)(2)\addsquare(0.3cm)(3)}
\def\b{\hbox{%
	\normalbaselines\m@th\offinterlineskip
	\vcenter{\hbox{\fsquare(0.3cm){2}}\vskip-0.4pt\hbox{\fsquare(0.3cm){2}}}}}
\def\c{\hbox{\normalbaselines\m@th\offinterlineskip%
	\vcenter{\hbox{\a}\vskip-0.4pt\hbox{\b}}}}
\def\ffsquare#1{%
	\fsquare(0.4cm,\hbox{#1})}
\def\naga{%
	\hbox{$\vcenter to 0.4cm{\normalbaselines\m@th
	\hrule\vfil\hbox to 1.2cm{\hfill$\cdots$\hfill}\vfil\hrule}$}}
\def\vnaga{\normalbaselines\m@th\baselineskip0pt\offinterlineskip%
	\vrule\vbox to 1.2cm{\vskip7pt\hbox to \dimen1{$\hfil\vdots\hfil$}\vfil}\vrule}
\def\dvbox{\hbox{\normalbaselines\m@th\baselineskip0pt\offinterlineskip\vbox{%
	  \hbox{$\ffsquare 1$}\vskip-0.4pt\hbox{$\vnaga$}\vskip-0.4pt\hbox{$\ffsquare N$}}}}
\def\sq(#1){\fsquare(0.4cm,#1)}
\def\Sq(#1){\fsquare(0.5cm,#1)}
\def\SSq(#1){\fsquare(0.9cm,#1)}
\def\mapright#1{\smash{\mathop{\longrightarrow}\limits^{#1}}}
\begin{document}

\title{Pictures and Littlewood-Richardson Crystals}
\author{Toshiki N\textsc{akashima}}
\address{\hspace{-20pt}T.N.:\,Department of Mathematics, Sophia University, 
Kioicho 7-1, Chiyoda-ku, Tokyo 102-8554, Japan}
\email{toshiki@mm.sophia.ac.jp,\,\,toshiki@sophia.ac.jp}
\author{Miki S\textsc{himojo}}
\address{\hspace{-20pt}
M.S.:\,Department of Mathematics, Sophia University, 
Kioicho 7-1, Chiyoda-ku, Tokyo 102-8554, Japan}
\email{m-shimoj@sophia.ac.jp}

\thanks{The first author is supported in part by  JSPS Grants in 
Aid for Scientific Research \#19540050.}

%\header{aaa}
%    General info
\subjclass{05E10, 17B20, 17B37}
\date{April , 2009}

\keywords{Pictures, Crystal bases, Littlewood-Richardson numbers, 
Young diagrams, Young tableaux, skew diagrams}

\begin{abstract}
We shall describe the one-to-one correspondence between 
the set of pictures and the set of Littlewood-Richardson crystals.
\end{abstract}

\maketitle

\newcommand{\Pic}{\mathbf{P}(\mu, \nu \setminus \lambda)}
\newcommand{\Cry}{\mathbf{B}(\mu)^{\nu}_{\lambda}}
\newcommand{\MECry}{\fsquare(0.3cm,i_1) \otimes \fsquare(0.3cm,i_2) \otimes \cdots \otimes  \fsquare(0.3cm,i_k) \otimes \cdots \otimes \fsquare(0.3cm,i_N)}

\renewcommand{\labelenumi}{$($\roman{enumi}$)$}
\renewcommand{\labelenumii}{$(${\rm \alph{enumii}}$)$}
\font\germ=eufm10

\newcommand{\cI}{{\mathcal I}}
\newcommand{\cA}{{\mathcal A}}
\newcommand{\cB}{{\mathcal B}}
\newcommand{\cC}{{\mathcal C}}
\newcommand{\cD}{{\mathcal D}}
\newcommand{\cF}{{\mathcal F}}
\newcommand{\cH}{{\mathcal H}}
\newcommand{\cK}{{\mathcal K}}
\newcommand{\cL}{{\mathcal L}}
\newcommand{\cM}{{\mathcal M}}
\newcommand{\cN}{{\mathcal N}}
\newcommand{\cO}{{\mathcal O}}
\newcommand{\cS}{{\mathcal S}}
\newcommand{\cV}{{\mathcal V}}
\newcommand{\fra}{\mathfrak a}
\newcommand{\frb}{\mathfrak b}
\newcommand{\frc}{\mathfrak c}
\newcommand{\frd}{\mathfrak d}
\newcommand{\fre}{\mathfrak e}
\newcommand{\frf}{\mathfrak f}
\newcommand{\frg}{\mathfrak g}
\newcommand{\frh}{\mathfrak h}
\newcommand{\fri}{\mathfrak i}
\newcommand{\frj}{\mathfrak j}
\newcommand{\frk}{\mathfrak k}
\newcommand{\frI}{\mathfrak I}
\newcommand{\fm}{\mathfrak m}
\newcommand{\frn}{\mathfrak n}
\newcommand{\frp}{\mathfrak p}
\newcommand{\fq}{\mathfrak q}
\newcommand{\frr}{\mathfrak r}
\newcommand{\frs}{\mathfrak s}
\newcommand{\frt}{\mathfrak t}
\newcommand{\fru}{\mathfrak u}
\newcommand{\frA}{\mathfrak A}
\newcommand{\frB}{\mathfrak B}
\newcommand{\frF}{\mathfrak F}
\newcommand{\frG}{\mathfrak G}
\newcommand{\frH}{\mathfrak H}
\newcommand{\frJ}{\mathfrak J}
\newcommand{\frN}{\mathfrak N}
\newcommand{\frP}{\mathfrak P}
\newcommand{\frT}{\mathfrak T}
\newcommand{\frU}{\mathfrak U}
\newcommand{\frV}{\mathfrak V}
\newcommand{\frX}{\mathfrak X}
\newcommand{\frY}{\mathfrak Y}
\newcommand{\frZ}{\mathfrak Z}
\newcommand{\rA}{\mathrm{A}}
\newcommand{\rC}{\mathrm{C}}
\newcommand{\rd}{\mathrm{d}}
\newcommand{\rB}{\mathrm{B}}
\newcommand{\rD}{\mathrm{D}}
\newcommand{\rE}{\mathrm{E}}
\newcommand{\rH}{\mathrm{H}}
\newcommand{\rK}{\mathrm{K}}
\newcommand{\rL}{\mathrm{L}}
\newcommand{\rM}{\mathrm{M}}
\newcommand{\rN}{\mathrm{N}}
\newcommand{\rR}{\mathrm{R}}
\newcommand{\rT}{\mathrm{T}}
\newcommand{\rZ}{\mathrm{Z}}
\newcommand{\bbA}{\mathbb A}
\newcommand{\bbC}{\mathbb C}
\newcommand{\bbG}{\mathbb G}
\newcommand{\bbF}{\mathbb F}
\newcommand{\bbH}{\mathbb H}
\newcommand{\bbP}{\mathbb P}
\newcommand{\bbN}{\mathbb N}
\newcommand{\bbQ}{\mathbb Q}
\newcommand{\bbR}{\mathbb R}
\newcommand{\bbV}{\mathbb V}
\newcommand{\bbZ}{\mathbb Z}
\newcommand{\adj}{\operatorname{adj}}
\newcommand{\Ad}{\mathrm{Ad}}
\newcommand{\Ann}{\mathrm{Ann}}
\newcommand{\rcris}{\mathrm{cris}}
\newcommand{\ch}{\mathrm{ch}}
\newcommand{\coker}{\mathrm{coker}}
\newcommand{\diag}{\mathrm{diag}}
\newcommand{\Diff}{\mathrm{Diff}}
\newcommand{\Dist}{\mathrm{Dist}}
\newcommand{\rDR}{\mathrm{DR}}
\newcommand{\ev}{\mathrm{ev}}
\newcommand{\Ext}{\mathrm{Ext}}
\newcommand{\cExt}{\mathcal{E}xt}
\newcommand{\fin}{\mathrm{fin}}
\newcommand{\Frac}{\mathrm{Frac}}
\newcommand{\GL}{\mathrm{GL}}
\newcommand{\Hom}{\mathrm{Hom}}
\newcommand{\hd}{\mathrm{hd}}
\newcommand{\rht}{\mathrm{ht}}
\newcommand{\id}{\mathrm{id}}
\newcommand{\im}{\mathrm{im}}
\newcommand{\inc}{\mathrm{inc}}
\newcommand{\ind}{\mathrm{ind}}
\newcommand{\coind}{\mathrm{coind}}
\newcommand{\Lie}{\mathrm{Lie}}
\newcommand{\Max}{\mathrm{Max}}
\newcommand{\mult}{\mathrm{mult}}
\newcommand{\op}{\mathrm{op}}
\newcommand{\ord}{\mathrm{ord}}
\newcommand{\pt}{\mathrm{pt}}
\newcommand{\qt}{\mathrm{qt}}
\newcommand{\rad}{\mathrm{rad}}
\newcommand{\res}{\mathrm{res}}
\newcommand{\rgt}{\mathrm{rgt}}
\newcommand{\rk}{\mathrm{rk}}
\newcommand{\SL}{\mathrm{SL}}
\newcommand{\soc}{\mathrm{soc}}
\newcommand{\Spec}{\mathrm{Spec}}
\newcommand{\St}{\mathrm{St}}
\newcommand{\supp}{\mathrm{supp}}
\newcommand{\Tor}{\mathrm{Tor}}
\newcommand{\Tr}{\mathrm{Tr}}
\newcommand{\wt}{\mathrm{wt}}
\newcommand{\Ab}{\mathbf{Ab}}
\newcommand{\Alg}{\mathbf{Alg}}
\newcommand{\Grp}{\mathbf{Grp}}
\newcommand{\Mod}{\mathbf{Mod}}
\newcommand{\Sch}{\mathbf{Sch}}\newcommand{\bfmod}{{\bf mod}}
\newcommand{\Qc}{\mathbf{Qc}}
\newcommand{\Rng}{\mathbf{Rng}}
\newcommand{\Top}{\mathbf{Top}}
\newcommand{\Var}{\mathbf{Var}}
\newcommand{\gromega}{\langle\omega\rangle}
\newcommand{\lbr}{\begin{bmatrix}}
\newcommand{\rbr}{\end{bmatrix}}
\newcommand{\forb}{\bigcirc\kern-2.8ex \because}
\newcommand{\forbb}{\bigcirc\kern-3.0ex \because}
\newcommand{\forbbb}{\bigcirc\kern-3.1ex \because}
\newcommand{\cd}{commutative diagram }
\newcommand{\SpS}{spectral sequence}
\newcommand\C{\mathbb C}
\newcommand\hh{{\hat{H}}}
\newcommand\eh{{\hat{E}}}
\newcommand\F{\mathbb F}
\newcommand\fh{{\hat{F}}}

\def\AA{{\mathcal A}}
\def\al{\alpha}
\def\bq{B_q(\ge)}
\def\bqm{B_q^-(\ge)}
\def\bqz{B_q^0(\ge)}
\def\bqp{B_q^+(\ge)}
\def\beneme{\begin{enumerate}}
\def\beq{\begin{equation}}
\def\beqn{\begin{eqnarray}}
\def\beqnn{\begin{eqnarray*}}
\def\bigsl{{\hbox{\fontD \char'54}}}
\def\bbra#1,#2,#3{\left\{\begin{array}{c}\hspace{-5pt}
#1;#2\\ \hspace{-5pt}#3\end{array}\hspace{-5pt}\right\}}
\def\cd{\cdots}
\def\CC{\hbox{\bf C}}
\def\ddd{\hbox{\germ D}}
\def\del{\delta}
\def\Del{\Delta}
\def\Delr{\Delta^{(r)}}
\def\Dell{\Delta^{(l)}}
\def\Delb{\Delta^{(b)}}
\def\Deli{\Delta^{(i)}}
\def\Delre{\Delta^{\rm re}}
\def\ei{e_i}
\def\eit{\tilde{e}_i}
\def\eneme{\end{enumerate}}
\def\ep{\epsilon}
\def\eeq{\end{equation}}
\def\eeqn{\end{eqnarray}}
\def\eeqnn{\end{eqnarray*}}
\def\fit{\tilde{f}_i}
\def\FF{{\rm F}}
\def\ft{\tilde{f}}
\def\gau#1,#2{\left[\begin{array}{c}\hspace{-5pt}#1\\
\hspace{-5pt}#2\end{array}\hspace{-5pt}\right]}
\def\ge{\hbox{\germ g}}
\def\gl{\hbox{\germ gl}}
\def\hom{{\hbox{Hom}}}
\def\ify{\infty}
\def\io{\iota}
\def\kp{k^{(+)}}
\def\km{k^{(-)}}
\def\llra{\relbar\joinrel\relbar\joinrel\relbar\joinrel\rightarrow}
\def\lan{\langle}
\def\lar{\longrightarrow}
\def\max{{\rm max}}
\def\lm{\lambda}
\def\Lm{\Lambda}
\def\mapright#1{\smash{\mathop{\longrightarrow}\limits^{#1}}}
\def\mm{{\bf{\rm m}}}
\def\nd{\noindent}
\def\nn{\nonumber}
\def\nnn{\hbox{\germ n}}
\def\catob{{\mathcal O}(B)}
\def\oint{{\mathcal O}_{\rm int}(\ge)}
\def\ot{\otimes}
\def\op{\oplus}
\def\opi{\ovl\pi_{\lm}}
\def\ovl{\overline}
\def\plm{\Psi^{(\lm)}_{\io}}
\def\qq{\qquad}
\def\q{\quad}
\def\qed{\hfill\framebox[2mm]{}}
\def\QQ{\hbox{\bf Q}}
\def\qi{q_i}
\def\qii{q_i^{-1}}
\def\ra{\rightarrow}
\def\ran{\rangle}
\def\rlm{r_{\lm}}
\def\ssl{\mathfrak{sl}}
\def\slh{\widehat{\ssl_2}}
\def\ge{\hbox{\germ g}}
\def\ti{t_i}
\def\tii{t_i^{-1}}
\def\til{\tilde}
\def\tm{\times}
\def\tt{{\hbox{\germ{t}}}}
\def\ttt{\hbox{\germ t}}
\def\ua{U_{\AA}}
\def\ue{U_{\vep}}
\def\uq{U_q(\ge)}
\def\ufin{U^{\rm fin}_{\vep}}
\def\ufinp{(U^{\rm fin}_{\vep})^+}
\def\ufinm{(U^{\rm fin}_{\vep})^-}
\def\ufinz{(U^{\rm fin}_{\vep})^0}
\def\uqm{U^-_q(\ge)}
\def\uqp{U^+_q(\ge)}
\def\uqmq{{U^-_q(\ge)}_{\bf Q}}
\def\uqpm{U^{\pm}_q(\ge)}
\def\uqq{U_{\bf Q}^-(\ge)}
\def\uqz{U^-_{\bf Z}(\ge)}
\def\ures{U^{\rm res}_{\AA}}
\def\urese{U^{\rm res}_{\vep}}
\def\uresez{U^{\rm res}_{\vep,\ZZ}}
\def\util{\widetilde\uq}
\def\uup{U^{\geq}}
\def\ulow{U^{\leq}}
\def\bup{B^{\geq}}
\def\blow{\ovl B^{\leq}}
\def\vep{\varepsilon}
\def\vp{\varphi}
\def\vpi{\varphi^{-1}}
\def\VV{{\mathcal V}}
\def\xii{\xi^{(i)}}
\def\Xiioi{\Xi_{\io}^{(i)}}
\def\WW{{\mathcal W}}
\def\wtil{\widetilde}
\def\what{\widehat}
\def\wpi{\widehat\pi_{\lm}}
\def\ZZ{\mathbb Z}
\def\spsp(#1,#2){\begin{pmatrix}
#1,\\ \hline#2\end{pmatrix}}

\theoremstyle{definition}
\newtheorem{df}{Definition}[section]
\newtheorem{pro}[df]{Proposition}
\newtheorem{thm}[df]{Theorem}
\newtheorem{lem}[df]{Lemma}
\newtheorem{ex}[df]{Example}
\newtheorem{cor}[df]{Corollary}
\newtheorem{con}[df]{Conjecture}

%%%%%%%%%%%%%%% Section 1 %%%%%%%%%%%%%%%%
\renewcommand{\thesection}{\arabic{section}}
\section{Introduction}
\setcounter{equation}{0}
\renewcommand{\theequation}{\thesection.\arabic{equation}}

The notion of pictures is initiated by James and Peel \cite{JP} and 
Zelevinsky \cite{Z}, which is roughly a bijective map
between two skew Young diagrams with certain conditions (See Sect.2).
Let $\lm,\mu,\nu$ be Young diagrams with $|\mu|=|\nu\setminus\lm|$ and 
denote the set of pictures
from $\mu$ to $\nu\setminus\lm$ by ${\bf P}(\mu,\nu\setminus\lm)$. 
Then one has the following remarkable result:
\begin{equation}
\sharp\Pic=c_{\lm,\mu}^{\nu},\label{LR}
\end{equation}
where $c_{\lm,\mu}^{\nu},$ is the usual Littlewood-Richardson number,
which is shown in \cite{F}.

The theory of crystal bases is introduced by
Kashiwara (\cite{K1},\cite{K2}), which is widely applied
to many areas in mathematics and physics, in particular,
combinatorial representation theory. In \cite{KN}, it is revealed that
crystal bases for classical Lie algebras are presented by
'Young tableaux' and in \cite{N1} by the first author
it is shown that so-called Littlewood Richardson rule for tensor 
products of representations are described by crystal bases
(see Sect.3).
So, together with (\ref{LR}) we deduced certain one to one
correspondence between pictures and crystal bases, which is given in
Theorem \ref{main}.

This article is organized as follows. In Sect.2, we introduce pictures.
In Sect.3, we review the crystal bases of type $A_n$ and the
description of
Littlewood-Richardson rule in terms of crystal bases.
In Sect.4, we shall state the main theorem, namely, we shall give an
explicit one to one correspondence between pictures and 
Littlewood-Richardson crystals
of type $A_n$. In the subsequent three sections,
we shall give a proof of the theorem. In the last section, 
we shall generalize
the notion of pictures and give certain conjecture on it.

The authors would like to 
thank M.Kashiwara and M.Okado for their comments
and advices on this work.

%%%%%%%%%%%%%%% Section 2 %%%%%%%%%%%%%%%%
\renewcommand{\thesection}{\arabic{section}}
\section{Young Tableaux and Pictures}
\setcounter{equation}{0}
\renewcommand{\theequation}{\thesection.\arabic{equation}}

\subsection{Young Tableaux}
Let $\lambda=(\lambda_1,\lambda_2,\cdots,\lambda_m)$
be a Young diagram or a partition, which satisfies
$\lambda_1 \geq\lambda_2 \geq \cdots \geq \lambda_m \geq 0$.
We usually write a Young diagram by using square boxes:
\begin{ex} For $\lambda = (2,2,1)$, write
$\lambda = \twotwoone(, , , , )$
\end{ex}
In this article we frequently use the following
coordinated expression 
for a Young diagram, that is, we identify a Young diagram 
with a subset of $\bbN\times\bbN$:

\quad\quad$\hspace{-3mm}{\color{white}\hfourbox({\color{black}1},{\color{black}2},{\color{black}3},{\color{black}4})}$

${\color{white}\vfourbox({\color{black}1},{\color{black}2},{\color{black}3},{\color{black}4})}$
$\begin{array}{l}\hspace{-4mm}\fourthreetwo(,,,,,,a,,)\,\,\,\\ \q\end{array}
$.\quad
In this diagram, the coordinate of $a$ is $(2,3)$.

\begin{ex}@For a Young diagram $\lambda = \twotwoone(,,,,)$, 
its coordinated expression is 
$\lambda = \left\{
(1,1), (1,2),
(2,1), (2,2),
(3,1)
\right\}$.
\end{ex}

\begin{df}
A numbering of a 
Young diagram $\lambda$ is called a {\it Young tableau} 
of shape $\lm$ if it satisfies
\begin{enumerate}
\item
In each row, all entries weakly increase from left to right.
\item
In each column, all entries increase from top to bottom.
\end{enumerate}
\end{df}
Note that it is also called 'semi-standard tableau'.
In this article, we prefer Young tableau to semi-standard 
tableau following \cite{F}.

For a Young tableau $T$ of shae $\lm$, 
we also consider a coordinate 
like as $\lm$. 
Then an entry of $T$ in $(i,j)$ is denoted by $T_{i,j}$ and 
called $(i,j)$-entry.
For $k>0$, define 
\begin{equation}
\label{Y-diag}
T^{(k)} = \{ (l,m) \in \lm | T_{l,m} = k \}.
\end{equation}
{\sl Remark.}
Note that in $T^{(k)}$, there is no two elements in one column.
Thus, we can write 
\[
 T^{(k)}=\{(a_1,b_1),~(a_2,b_2),\cd,(a_m,b_m)\},
\]
with 
$a_1\leq a_2\leq \cd a_m$ and $b_1>b_2>\cd >b_m$.
If $(i,j)$-entry in a tableau $T$ is $k$ and 
$(i,j)=(a_p,b_p)$ in $T^{(k)}$ as above, we define a function $p(T;i,j)$ by 
\begin{equation}
 p(T;i,j)=p,
\label{pT}
\end{equation}
that is, $p(T;i,j)$ is the number of $(i,j)$-entry 
from the right in $T^{(k)}$.
It is immediate from the definition:
\begin{equation}
\text{If }T_{i,j}=T_{x,y}\text{ and }p(T;i,j)=p(T;x,y),
\text{ then }(i,j)=(x,y)
\label{ijxy}
\end{equation}
\begin{ex}
For 
$T=\twotwoone(1,2,2,3,4)$, we have 
$T_{1,1} = 1,T_{1,2}=2, T_{2,1} = 2, \\
T_{2,2}=3, T_{3,1} =4$, $p(T;1,2)=1$, $p(T;2,1)=2$
%content'Í$(c_1, c_2, c_3, c_4) = (1,2,1,1)$
\end{ex}

\begin{df}
Let $\lambda$ and $\mu$ be Young diagrams with $\mu\subset\lm$.
A {\it skew diagram} $\lambda \setminus \mu$ is obtained by 
removing $\mu$ from $\lm$.
\end{df}
\begin{ex}
For $\lambda = (2,2), \mu = (1)$, we have 
$\lambda \setminus \mu =\normalbaselines\m@th\offinterlineskip\vcenter{\hbox{{\color{white}\fsquare(0.3cm,)}\fsquare(0.3cm,)}\vskip-0.4pt\hbox{\fsquare(0.3cm,)\addsquare(0.3cm,)}}$
\end{ex}
%%%%%%%%%%%%%%%%%%%%%%%%%%%%%%%%%%%%%%%%%%%
\subsection{Picture}

Now, let us introduce the notion of pictures.
\begin{df}
(Orders $<_P$ and $<_J$)
We define the following two kinds of orders on
a subset $X\subset\bbN \times \bbN$: For
$(a,b), \,\,(c,d)\in X$,
\begin{item}
(1) $\leq_P$: \quad $(a,b)\leq_P(c,d)$ iff
$a\leq c \text{ and } b \leq d$.\\
(2) $ \leq_J$: \quad $(a,b) \leq_J (c,d)$ iff
$a < c \text{, or }  a = c \text{ and } b\geq d$.\\
\end{item} 
\end{df}
Note that the order $\leq_P$ is a partial order 
and $\leq_J$ is a total order.

\begin{df}[\cite{Z}] 
Let $X,Y \subset \bbN \times \bbN$.
\begin{item}
(1) 
A map $f : X \to Y$ is said to be 
{\it PJ-standard} if it satisfies
\[
\text{For }(a,b),(c,d) \in X, 
\text{if }(a,b) \leq_P (c,d), \text{ then }  
f(a,b) \leq_J f(c,d).
\]
(2)
A map $f:X\to Y$ is a {\it picture} if it is 
bijective and
both $f$ and $f^{-1}$ are PJ-standard.
\end{item}
\end{df}

Taking three Young diagrams
$\lambda, \mu, \nu\subset \bbN\times\bbN$, 
denote the set of pictures by:
\[
\Pic := \{ f : \mu \to \nu \setminus \lambda 
\,|\,f\text{ is a picture.}\}
\]

%%%%%%%%%%%%%%% Section 3 %%%%%%%%%%%%%%%%
\renewcommand{\thesection}{\arabic{section}}
\section{Crystal Bases and Young tableaux}
\setcounter{equation}{0}
\renewcommand{\theequation}{\thesection.\arabic{equation}}

Crystal bases of type $A_n$ is realized in terms of Young tableaux
(\cite{KN}).

Let $\Lm_i$ (resp. $\al_i$, $h_i$)($i=1,2,\cd,n$) 
be the fundamental weight (resp. simple root, simple coroot) of type
$A_n$.

Let $B_1:=\{\fsquare(0.3cm,i)|i=1,2\cd,n+1\}$ be the crystal of type 
$A_n$ for the fundamental weight $\Lm_1$.
A dominant weight $\lm$ is identified with a Young diagram in usual
way. Then, we use the same notation for a dominant weight and the
corresponding Young diagram.
Let $\lm$ be a Young diagram with a depth at most $n$ and $|\lm|=N$.
Then the crystal $B(\lm)$ is embedded in $B_1^{\otimes N}$ 
and realized by Young tableaux (\cite{KN}) . This embedding, say 
{\it reading}, 
is not unique. Now, we introduce 
two of them. One is the {\it middle-eastern reading} and the other is 
the {\it far-eastern reading} (\cite{HK}).

\begin{df}
Let $T$ be a Young tableau of shape $\lm$ with entries $\{1,2,\cd, n+1\}$.
\begin{enumerate}
\item
We read the entries in $T$ each row from right to left and from the top
row to the bottom row. Then  the resulting sequence of the entries 
$i_1,i_2,\cd,i_N$ gives the embedding of crystals:
\[
 B(\lm)\hookrightarrow B_1^{\ot N} \q
(T\mapsto \fsq(i_1)\ot\cd\ot\fsq(i_N)),
\]
which is called middle-eastern reading and denoted by $\rm ME$.
\item
We read the entries in $T$ each column from the top to the 
bottom and from the right-most column to the left-most column.
Then  the resulting sequence of the entries 
$i_1,i_2,\cd,i_N$ gives the embedding of crystals:
\[
 B(\lm)\hookrightarrow B_1^{\ot N} \q
(T\mapsto \fsq(i_1)\ot\cd\ot\fsq(i_N)),
\]
which is called far-eastern reading and denoted by $\rm FE$.
\end{enumerate}
\end{df}

\begin{ex}@\quad For a Young tableau
$T = \fourthreeone(1,2,2,3,2,3,4,5)$, we have
\begin{eqnarray*}
&&
{\rm ME}(T) = \fsq(3) \otimes \fsq(2) \otimes 
\fsq(2) \otimes \fsq(1) \otimes \fsq(4) 
\otimes \fsq(3) \otimes \fsq(2) \otimes \fsq(5),\\
&&
{\rm FE}(T)= \fsq(3) \otimes \fsq(2) \otimes \fsq(4) 
\otimes \fsq(2) \otimes \fsq(3) \otimes \fsq(1) 
\otimes \fsq(2) \otimes \fsq(5).
\end{eqnarray*}
\end{ex}

\begin{df}(Addition)
For $i\in\{1,2,\cd,n+1\}$ and a Young diagram 
$\lambda=(\lambda_1,\lambda_2,\cdots, \lambda_n)$, we define
\[
\lambda[i]:=(\lambda_1,\lambda_2,\cdots,\lambda_i+1,\cdots, \lambda_n)
\]
which is said to be an {\it addition} of $i$ to $\lm$.
In general, for $i_1,i_2,\cd,i_N\in\{1,2,\cd,n+1\} $ and a Young diagram 
$\lm$, we define 
\[
 \lambda[i_1,i_2,\cdots,\i_N]:=(\cd ((\lambda[i_1])[i_2])\cd)[i_N],
\]
which is called an {\it addition} of $i_1,\cd,i_N$ to $\lm$.
\end{df}

\begin{ex}
For a sequence ${\bf i}=31212$, 
% $ME(T) = \fsquare(0.4cm,3) \otimes \fsquare(0.4cm,1) \otimes 
%\fsquare(0.4cm,2) \otimes\fsquare(0.4cm,1) \otimes \fsquare(0.4cm,2)$
the addition of $\bf i$ to $\lambda = \twoone(,,)$ is:
\[
\twooneone(,,,3) \quad \longrightarrow \quad \threeoneone(,,1,,)
\quad \longrightarrow \quad \threetwoone(,,,,2,)
\quad \longrightarrow \quad \fourtwoone(,,,1,,,) \quad \longrightarrow
 \quad \fourthreeone(,,,,,,2,).
\]
\end{ex}
\vspace{0.5cm}
{\sl Remark.}
For a Young diagram $\lm$, an addition $\lm[i_1,\cd,i_N]$ is not 
necessarily a Young diagram.

\begin{ex}
For a sequence ${\bf i'}=22133$, 
%\quad $ME(T) = \fsquare(0.4cm,2) \otimes \fsquare(0.4cm,2) 
%\otimes \fsquare(0.4cm,1) \otimes\fsquare(0.4cm,3) 
%\otimes \fsquare(0.4cm,3)$
the addition of $\bf i'$ to 
$\lambda = \twoone(,,)$ is 
\[
\twotwo(,,,2) \longrightarrow \quad \twothree(,,,,2) \longrightarrow \quad
\threethree(,,1,,,) \longrightarrow \quad  \threethreeone(,,,,,,3)
 \longrightarrow \quad \threethreetwo(,,,,,,,3).
\]
Then we see that $\lm[2,2]$ is not a Young digram.
\end{ex}

\subsection{Littlewood-Richardson rule}
As an application of the description of crystal bases of type $A_n$,
we see so-called ``Littlwood-Richardson rule'' of type $A_n$.

For a sequence $i_1,i_2,\cd,i_N\in \{1,2,\cd,n+1\}$ and 
a Young diagram $\lm$, let $\til\lm:=\lm[i_1,i_2,\cd,i_{N}]$ be
an addition of $i_1,i_2,\cd,i_N$ to $\lm$. Then set
\[
{\mathbf B}(\til\lm)=\begin{cases}
{\mathbf B}(\til\lm)&\text{ if }\lm[i_1,\cd,i_k]
\text{ is a Young diagram for any }k=1,2,\cd,N,\\
\emptyset&\text{otherwise.}
\end{cases}
\]

\begin{thm}[\cite{N1}]
\label{LRrule}
Let $\lambda$ and $\mu$ be Young diagrams with at most $n$ rows.
Then we have
\begin{equation}
\mathbf{B}(\lambda) \otimes \mathbf{B}(\mu) \cong  
\bigoplus_{\tiny\begin{array}{l}T\in \mathbf{B}(\mu),\\
{\rm FE}(T)= \fsq(i_1) 
\otimes\cdots\otimes\fsq(i_N)
\end{array}} 
\mathbf{B}(\lambda[i_1,i_2,\ldots,i_N]).
\label{LR}
\end{equation}
\end{thm}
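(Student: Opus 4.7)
The plan is to decompose $\mathbf{B}(\lambda) \otimes \mathbf{B}(\mu)$ by locating its Kashiwara highest weight vectors. Since every connected component of an integrable crystal of finite type is determined up to isomorphism by the weight of its unique highest weight vector, it suffices to show that the highest weight vectors of $\mathbf{B}(\lambda) \otimes \mathbf{B}(\mu)$ are exactly the elements $u_\lambda \otimes T$ for those $T \in \mathbf{B}(\mu)$ whose far-eastern reading ${\rm FE}(T) = \fsq(i_1) \otimes \cdots \otimes \fsq(i_N)$ has the property that every partial addition $\lambda[i_1, \ldots, i_k]$ is a Young diagram, and that in this case the weight equals $\lambda[i_1, \ldots, i_N]$.

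First I would embed $\mathbf{B}(\mu) \hookrightarrow B_1^{\otimes N}$ via ${\rm FE}$ and work inside $\mathbf{B}(\lambda) \otimes B_1^{\otimes N}$. Using Kashiwara's tensor product rule together with $\tilde{e}_i u_\lambda = 0$ and $\varphi_i(u_\lambda) = \langle h_i, \lambda \rangle = \lambda_i - \lambda_{i+1}$, one sees that $u_\lambda \otimes T$ is annihilated by every $\tilde{e}_i$ if and only if $\varepsilon_i(T) \leq \lambda_i - \lambda_{i+1}$ for each $i = 1, \ldots, n$.

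Second, I would translate this family of inequalities into a condition on the reading word. By the signature rule, $\varepsilon_i(T)$ computed from ${\rm FE}(T)$ equals $\max_k \bigl( \#\{j \leq k : i_j = i+1\} - \#\{j \leq k : i_j = i\}\bigr)$. Hence the inequalities $\varepsilon_i(T) \leq \lambda_i - \lambda_{i+1}$ for all $i$ are equivalent to: for every prefix length $k$ and every $i$, the tentative row $(i+1)$ length $\lambda_{i+1} + \#\{j \leq k : i_j = i+1\}$ does not exceed the tentative row $i$ length $\lambda_i + \#\{j \leq k : i_j = i\}$; in other words, $\lambda[i_1, \ldots, i_k]$ is a Young diagram for every $k$. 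The choice of the far-eastern reading matters here because reading columns top-to-bottom, right-to-left is what makes the stepwise addition procedure correspond to a lattice-word traversal of $\mu$; with a different reading one would have to impose a different condition.

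Finally, when $u_\lambda \otimes T$ is a highest weight vector its weight equals $\lambda + \mathrm{wt}(T) = \lambda[i_1, \ldots, i_N]$, so the connected component through it is isomorphic to $\mathbf{B}(\lambda[i_1, \ldots, i_N])$; summing over all such $T$ gives the claimed decomposition. The main technical hurdle I anticipate is the signature-rule computation together with the prefix-by-prefix bookkeeping that translates the crystal condition $\varepsilon_i(T) \leq \lambda_i - \lambda_{i+1}$ into addition validity. Although the endpoint statement is clean, its verification requires the left-to-right tensor cancellations to be precisely aligned with the column-first order of ${\rm FE}$, which is exactly what makes this reading the natural one for the theorem.
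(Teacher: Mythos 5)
Your proposal is correct and is essentially the argument of the cited source \cite{N1} (the present paper only quotes the theorem without proof): one classifies the highest weight vectors of $\mathbf{B}(\lambda)\otimes\mathbf{B}(\mu)$ as the elements $u_\lambda\otimes T$ with $\varepsilon_i(T)\le\langle h_i,\lambda\rangle=\lambda_i-\lambda_{i+1}$ via Kashiwara's tensor product rule, and the signature-rule computation turns these inequalities into the requirement that every partial addition $\lambda[i_1,\ldots,i_k]$ be a Young diagram. One minor caveat: your closing remark that a different reading would force a different condition overstates the role of ${\rm FE}$ --- the identical argument and the identical condition hold for ${\rm ME}$ and for any admissible reading, as the paper notes immediately after the theorem and again in its final section.
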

Note that this also holds for $\rm ME$.

Let $c_{\lm,\mu}^{\nu}$ be the multiplicity of $\mathbf{B}(\nu)$ in 
$\mathbf{B}(\lm)\ot\mathbf{B}(\mu)$, which is denoted by 
$c_{\lm,\mu}^{\nu}$ and called the 
Littlewood-Richardson number. We have the following:
\begin{thm}[\cite{F}]
$\sharp\Pic=c_{\lm,\mu}^{\nu}$.
\end{thm}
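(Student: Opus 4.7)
The plan is to apply Theorem~\ref{LRrule} (using the middle-eastern version, which is equally valid by the remark following that theorem): the number $c_{\lm,\mu}^{\nu}$ equals the cardinality of the set of Young tableaux $T$ of shape $\mu$ whose ME reading $i_1,\ldots,i_N$ gives $\lm[i_1,\ldots,i_N]=\nu$ with each partial addition $\lm[i_1,\ldots,i_k]$ again a Young diagram. Call such $T$ \emph{LR-admissible}. It is then enough to construct a bijection $\Phi\colon\Pic\to\{\text{LR-admissible tableaux of shape }\mu\}$.

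The map $\Phi$: for $f\in\Pic$ define $T^f$ of shape $\mu$ by $T^f_{i,j}:=a$ when $f(i,j)=(a,b)$. A crucial observation is that the ME reading order on $\mu$ coincides with the restriction of the total order $\leq_J$ to $\mu$. Given this, PJ-standardness of $f$ yields weak-row-increase of $T^f$, while PJ-standardness of $f^{-1}$ rules out two entries of a single column of $\mu$ landing in the same row of $\nu\setminus\lm$, giving strict-column-increase; thus $T^f$ is a Young tableau. Combining PJ-standardness of $f^{-1}$ with the ME/$\leq_J$ identification, whenever $(a,b)\leq_P(c,d)$ in $\nu\setminus\lm$ the element $(a,b)$ appears before $(c,d)$ in the sequence $f(\text{ME}_1),\ldots,f(\text{ME}_N)$. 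Thus this sequence is a linear extension of $\leq_P$, equivalently every partial union $\lm\cup\{f(\text{ME}_1),\ldots,f(\text{ME}_k)\}$ is a Young diagram, and so $T^f$ is LR-admissible.

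For the inverse, given LR-admissible $T$ with ME reading $i_1,\ldots,i_N$, each step creates a unique new box $b_k\in\nu\setminus\lm$ in row $i_k$, and I set $\Phi^{-1}(T)$ to send the $k$-th ME box of $\mu$ to $b_k$. PJ-standardness of $(\Phi^{-1}(T))^{-1}$ is immediate from LR-admissibility (the $b_k$ are enumerated in a linear extension of $\leq_P$ by construction). The main obstacle is PJ-standardness of $\Phi^{-1}(T)$ itself: for $(i,j)\leq_P(i',j')$ in $\mu$, writing $(a,b)=\Phi^{-1}(T)(i,j)$ and $(a',b')=\Phi^{-1}(T)(i',j')$, when $i<i'$ strict column-increase of $T$ gives $a<a'$ hence $\leq_J$; when $i=i'$ with $j\leq j'$, weak row-increase gives $a\leq a'$, and in the delicate case $a=a'$ the ME-order fact that $(i,j')$ precedes $(i,j)$ combined with the addition rule (successive additions of value $a$ fill row $a$ from left to right) forces $b\geq b'$. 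Together these directions show $\Phi$ and $\Phi^{-1}$ are mutual inverses, yielding $\sharp\Pic=c_{\lm,\mu}^{\nu}$.
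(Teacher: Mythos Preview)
The proposal is correct. The paper itself does not prove this statement---it is cited from \cite{F}---but the paper's main result (Theorem~\ref{main}, proved in Sections~5--7) gives an independent proof via exactly the bijection you construct: your $\Phi$ coincides with the paper's $\Phi$, your $\Phi^{-1}$ coincides with the paper's $\Psi$ (compare Lemma~\ref{lem:tabad}), and your linear-extension argument for LR-admissibility is a compressed form of Lemma~\ref{lem:add} together with the induction in \S5.3.
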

For Young diagrams 
$\lambda, \mu, \nu$, we define
\[
\Cry := 
\left\{T\in{\mathbf B}(\mu)|
\begin{array}{l} 
\text{ME}(T)=\MECry, \\
\text{for any }k=1,\cd,N,\\
\lm[i_1,\cd,i_k]\text{ is a Young diagram and}\\
\lm[i_1,\cd,i_N]=\nu.
\end{array}
\right\},
\]
whose element is called a {\it Littlewood-Richardson crystal} with
respect to a triplet 
$(\lm,\mu,\nu)$.
Then by Theorem \ref{LRrule}, we have
\begin{cor}
$\sharp\Pic=\sharp\Cry$.
\end{cor}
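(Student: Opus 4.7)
The plan is to observe that this corollary is an immediate consequence of the two theorems stated just above it, together with the definition of $\Cry$. There is no substantive new content to prove; the task is simply to match the two combinatorial objects to a common intermediate quantity, namely $c^\nu_{\lambda,\mu}$.

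First, I would appeal to Fulton's result (the theorem attributed to \cite{F}) to get $\sharp\Pic = c^\nu_{\lambda,\mu}$. Second, I would interpret the definition of $\Cry$ in light of Theorem \ref{LRrule}. By that theorem (applied with ${\rm ME}$ in place of ${\rm FE}$, as noted in the remark following its statement),
\[
\mathbf{B}(\lambda)\otimes\mathbf{B}(\mu)\cong\bigoplus_{T\in\mathbf{B}(\mu)}\mathbf{B}\bigl(\lambda[i_1,\ldots,i_N]\bigr),
\]
where $\mathrm{ME}(T)=\fsq(i_1)\otimes\cdots\otimes\fsq(i_N)$ and where, by the convention introduced for $\mathbf{B}(\widetilde\lambda)$, only those $T$ for which every partial addition $\lambda[i_1,\ldots,i_k]$ is a Young diagram actually contribute. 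Extracting the $\mathbf{B}(\nu)$-isotypic component, the multiplicity $c^\nu_{\lambda,\mu}$ equals the number of $T\in\mathbf{B}(\mu)$ whose ME reading $i_1,\ldots,i_N$ both keeps every partial addition a Young diagram and satisfies $\lambda[i_1,\ldots,i_N]=\nu$. This is exactly the defining condition of $\Cry$, so $\sharp\Cry=c^\nu_{\lambda,\mu}$.

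Combining the two equalities $\sharp\Pic=c^\nu_{\lambda,\mu}=\sharp\Cry$ finishes the proof. The only step that requires any attention is verifying that Theorem \ref{LRrule} holds with the ${\rm ME}$ reading used in the definition of $\Cry$, rather than the ${\rm FE}$ reading stated in the theorem itself; this is exactly the content of the remark immediately following the theorem, so no independent argument is required here. The genuine work, a bijective realization of these two equinumerous sets, is deferred to the subsequent sections where the explicit correspondence of Theorem \ref{main} is constructed.
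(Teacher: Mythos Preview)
Your argument is correct and matches the paper's own reasoning: the paper simply states ``Then by Theorem \ref{LRrule}, we have'' the corollary, relying on Fulton's theorem for $\sharp\Pic=c_{\lambda,\mu}^{\nu}$ and on Theorem \ref{LRrule} (with the ME reading, as in the remark) to identify $\sharp\Cry$ with the same multiplicity. Your write-up spells out exactly this logic.
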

We shall see an explicit one-to-one correspondence between 
$\Pic$ and $\Cry$ in the next section.
%%%%%%%%%%%%%%% Section 4 %%%%%%%%%%%%%%%%
\renewcommand{\thesection}{\arabic{section}}
\section{Main Theorem}
\setcounter{equation}{0}
\renewcommand{\theequation}{\thesection.\arabic{equation}}

For Young diagrams $\lambda, \mu, \nu$, we have two sets:
$\Pic $ and 
$\Cry$.
In case 
$|\lambda|+|\mu|=|\nu|$, we define the following 
map $\Phi:\Pic\to\Cry$:
For $f=(f_1, f_2)\in\Pic$, set 
\[
\Phi(f)_{i,j}:= f_{1}(i,j),
\]
that is, $\Phi(f)$ is a filling of shape $\mu$ and 
its $(i, j)$-entry is given as $f_1(i, j)$.

Furthermore, for a crystal $T \in \Cry$, 
define a  map 
$\Psi : \Cry \to \Pic$ by 
\[
\Psi(T) : (i,j)\in\mu \mapsto (T_{i,j}, 
\lambda_{T_{i,j}}+p(T;i,j))\in\nu\setminus\lambda,
\]
where 
$p(T;i,j)$ as in (\ref{pT}).

The following is the main theorem in this article.
\begin{thm}\label{main}
For Young diagrams $\lm,\mu,\nu$ as above,
the map $\Phi:\Pic\to\Cry$ is a bijection and 
the map $\Psi$ is the inverse of $\Phi$.
\end{thm}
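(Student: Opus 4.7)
The plan is to split the theorem into three checks: (i) $\Phi$ sends $\Pic$ into $\Cry$; (ii) $\Psi$ sends $\Cry$ into $\Pic$; (iii) $\Phi$ and $\Psi$ are inverse to each other. Since $\Phi$ merely extracts the first coordinate of $f(i,j)$, the equality $\Phi\circ\Psi=\id$ is immediate from the definitions, so all of the substance lies in (i), (ii), and $\Psi\circ\Phi=\id$, which I would address in that order in three successive sections.

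For (i) I first verify that $T:=\Phi(f)$ is a semistandard Young tableau of shape $\mu$. Row weak-increase $T_{i,j}\le T_{i,j+1}$ is immediate from PJ-standardness of $f$ applied to $(i,j)\leq_P(i,j+1)$. Column strict-increase $T_{i,j}<T_{i+1,j}$ uses PJ of $f$ for the $\le$, while PJ of $f^{-1}$ rules out equality: if $f_1(i,j)=f_1(i+1,j)$, the two image boxes would share a row, and PJ of $f^{-1}$ applied to the pair ordered by column would force $(i+1,j)\leq_J(i,j)$, which is false. For the LR condition, a counting argument from bijectivity of $f$ forces the final shape to be $\nu$. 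The intermediate Young property is the key point: when adding $k=T_{i,j}$ at position $(k,c)=f(i,j)$, the box $(k-1,c)$ must already be present in the evolving shape. If $(k-1,c)\in\lambda$ this is automatic; otherwise $(k-1,c)\in\nu\setminus\lambda$ has some preimage $(i',j')\in\mu$ under $f$, and PJ of $f^{-1}$ applied to $(k-1,c)\leq_P(k,c)$ yields $(i',j')<_J(i,j)$, meaning $(i',j')$ was read strictly earlier in ME order, so $(k-1,c)$ has already been filled.

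For (ii) the map $\Psi(T)$ is a well-defined bijection $\mu\to\nu\setminus\lambda$: injectivity is (\ref{ijxy}) and the image lies in $\nu\setminus\lambda$ because the LR condition forces $\lambda_k+p(T;i,j)\le\nu_k$. PJ-standardness of $\Psi(T)$ is a routine case-analysis depending on whether the two source points share a row of $\mu$, using the semistandard inequalities on $T$ together with the fact that $p(T;\cdot,\cdot)$ counts entries from the right within $T^{(k)}$. PJ-standardness of $\Psi(T)^{-1}$ is the technical heart. Given $(k,y)\leq_P(k',y')$ in $\nu\setminus\lambda$ with preimages $(i,j)$ and $(i',j')$, the case $k=k'$ reduces to comparing $p$-values in $T^{(k)}$. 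For $k<k'$ I would use the LR condition essentially: just before we add $(k',y')$ in ME order, the evolving shape is Young and column $y'$ is filled through row $k'-1\ge k$; since $y'\ge y>\lambda_k$, the box $(k,y')$ lies in $\nu\setminus\lambda$ and was added by some $(i'',j'')<_J(i',j')$ with $\Psi(T)(i'',j'')=(k,y')$. Applying the $k=k'$ subcase to $(i,j)$ and $(i'',j'')$ gives $(i,j)\leq_J(i'',j'')$, and composing yields $(i,j)\leq_J(i',j')$.

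Finally, for $\Psi\circ\Phi=\id$ fix $f\in\Pic$, set $T=\Phi(f)$ and $k=f_1(i,j)$. The preimages under $f$ of the $k$-th row of $\nu\setminus\lambda$ are exactly $T^{(k)}=\{(a_1,b_1),\dots,(a_m,b_m)\}$ with $a_p$ nondecreasing and $b_p$ strictly decreasing, and by bijectivity the corresponding $f_2$-values form a permutation of $\{\lambda_k+1,\dots,\lambda_k+m\}$. Comparing consecutive entries, $a_p=a_{p+1}$ puts them in the same row so PJ of $f$ forces $f_2(a_p,b_p)<f_2(a_{p+1},b_{p+1})$, while $a_p<a_{p+1}$ makes the reverse $f_2$-ordering contradict PJ of $f^{-1}$; hence the $f_2$-values strictly increase along $p$, giving $f_2(a_p,b_p)=\lambda_k+p=\lambda_{f_1(a_p,b_p)}+p(T;a_p,b_p)$. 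The main obstacle throughout is the PJ-standardness of $\Psi(T)^{-1}$ in the case $k<k'$: this is the one place where the LR condition on $T$ enters in an essential way, via the shape-by-shape Young property of the ME reading.
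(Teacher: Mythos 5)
Your proposal is correct and follows essentially the same three-step decomposition as the paper (well-definedness of $\Phi$, well-definedness of $\Psi$, mutual inverses), with the same key ingredients: the middle-eastern reading respects $<_J$, the $p$-th entry equal to $k$ is added at $(k,\lambda_k+p)$, and the Young property of intermediate shapes drives the PJ-standardness of $\Psi(T)^{-1}$ in the column direction. The only local variations are cosmetic: you verify the intermediate Young property by locating an earlier preimage of the box directly above (rather than the paper's minimality-with-respect-to-$<_P$ argument), and you derive $f_2(a_p,b_p)=\lambda_k+p$ directly from pictureness and bijectivity rather than isolating it as the paper's Lemma~\ref{lem:add}.
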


\begin{ex}\label{ff}
Take $\lambda=(3,1,1)=\threeoneone(,,,,),$ 
$\mu=(3,2)=\threetwo(,,,,)$ and 
$\nu=(4,3,2,1)=\fourthreetwoone$. 
As subsets in $\bbN\times\bbN$, we have\\
$\mu = \{ (1,1), (1,2), (1,3), (2,1), (2,2) \}$,
$\nu \setminus \lambda=
\{(1,4), (2,2), (2,3), (3,2),(4,1) \}$.

In this case $\sharp\Pic=2$. Set $\Pic=
\{f, f'\}$ and their
explicit forms are 

$f=$
\begin{tabular}{c||c|c|c|c|c}
$\mu$ & $(1,1)$ & $(1,2)$ & $(1,3)$ & $(2,1)$ & $(2,2)$\\
\hline
$\nu \setminus \lambda$ & $(1,4)$ & $(2,3)$ & $(2,2)$ & $(3,2)$ & (4,1)\\
\end{tabular}

\vspace{0.3cm}

$f' =$
\begin{tabular}{c||c|c|c|c|c}
$\mu$ & $(1,1)$ & $(1,2)$ & $(1,3)$ & $(2,1)$ & $(2,2)$\\
\hline
$\nu \setminus \lambda$ & $(1,4)$ & $(2,2)$ & $(4,1)$ & $(2,3)$ & (3,2)\\
\end{tabular}

We have
\[
\Cry = \{T= \threetwo(1,2,2,3,4),\,\,
T'= \threetwo(1,2,4,2,3)\},
\]
and $\Phi(f) = T$, $\Phi(f') = T'$.
\end{ex}

In the subsequent sections, 
let us give the proof of 
Theorem \ref{main},
which consists in the following three steps:
\begin{enumerate}
\item
Well-definedness of the map $\Phi$.
\item
Well-definedness of the map $\Psi$.
\item
Bijectivity of $\Phi$ and $\Psi=\Phi^{-1}$.
\end{enumerate}

%%%%%%%%%%%%%%% Section 5 %%%%%%%%%%%%%%%%
\renewcommand{\thesection}{\arabic{section}}
\section{Well-definedness of $\Phi$}
\setcounter{equation}{0}
\renewcommand{\theequation}{\thesection.\arabic{equation}}

For the well-definedness of $\Phi$, 
it suffices to show:
%%%%%%%%%%%%%%%%%%%
\begin{pro}\label{prop:phi}
Let $\lm,\mu$ and $\nu$ be Young diagrams with
$|\lm|+|\mu|=|\nu|$. 
\begin{enumerate}
\item
For any $f\in\Pic$, $\Phi(f)$ is a Young tableau of shape
$\mu$, that is, $\Phi(f)\in B(\mu)$.
\item
Writing ${\rm ME}(\Phi(f)) = \MECry$, for any $k=1,\cd,N$,
$\lambda[i_1, i_2, \cdots ,i_k]$ is a Young diagram and 
$\lm[i_1,\cd,i_N]=\nu$.
\end{enumerate}
\end{pro}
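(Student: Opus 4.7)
The plan is to handle (1) and (2) separately, in both cases leveraging the pair of PJ-standard conditions on $f$ and $f^{-1}$, together with the key observation that the ME reading order on the boxes of $\mu$ is precisely the total order $<_J$ on $\mu \subset \bbN \times \bbN$ (smallest read first).

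For part (1), I would verify the row and column conditions on $\Phi(f)$ directly. Given adjacent boxes $(i,j), (i,j+1) \in \mu$, we have $(i,j) \leq_P (i,j+1)$, so PJ-standardness of $f$ yields $f(i,j) \leq_J f(i,j+1)$, and inspecting the two clauses of $\leq_J$ shows $f_1(i,j) \leq f_1(i,j+1)$ in either case. For the strict column condition at $(i,j), (i+1,j)$, I would assume for contradiction that $f_1(i,j) = f_1(i+1,j) = r$, write $f(i,j) = (r,a)$ and $f(i+1,j) = (r,b)$ with $a \geq b$ (and $a \neq b$ by injectivity of $f$), then apply PJ-standardness of $f^{-1}$ to the pair $(r,b) \leq_P (r,a)$ to get $(i+1,j) \leq_J (i,j)$, which contradicts $i+1 > i$.

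For part (2), enumerate the boxes of $\mu$ in ME order as $(p_1,q_1), \ldots, (p_N,q_N)$ and write $f(p_k,q_k) = (i_k, c_k)$, so that $i_k = \Phi(f)_{p_k,q_k}$. Define $S_k := \lambda \cup \{f(p_m,q_m) : m \leq k\} \subset \nu$, and prove by induction on $k$ that (a) $S_k$ is a Young diagram and (b) the row-length sequence of $S_k$ equals $\lambda[i_1,\ldots,i_k]$. The inductive step reduces to showing that $(i_k,c_k)$ is an addable corner of the Young diagram $S_{k-1}$: one must verify $(i_k, c_k-1) \in S_{k-1}$ if $c_k > 1$, $(i_k-1, c_k) \in S_{k-1}$ if $i_k > 1$, and $(i_k,c_k) \notin S_{k-1}$. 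The last is immediate from injectivity of $f$. For each neighbor that happens to lie in $\nu \setminus \lambda$, comparability under $\leq_P$ with $(i_k,c_k)$ and PJ-standardness of $f^{-1}$ force its preimage to be strictly less than $(p_k,q_k)$ under $<_J$ (equality ruled out by distinctness), hence processed earlier in ME order; the case when the neighbor already lies in $\lambda$ is immediate from $\lambda$ being a Young diagram. Applying the induction at $k = N$ then yields $S_N = \lambda \cup (\nu \setminus \lambda) = \nu$ and $\lambda[i_1,\ldots,i_N] = \nu$.

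The crux of the argument, and the step I expect to require the most care to state cleanly, is the identification of ME reading order with $<_J$; once that is in place, PJ-standardness of $f^{-1}$ transparently controls the order in which corners become addable, ensuring the addition sequence stays within Young diagrams and terminates exactly at $\nu$. The only other real attention needed is in the boundary cases $c_k = 1$ and $i_k = 1$ for the addable-corner check, which are easily dispatched by inspection (using injectivity of $f$ when $c_k = 1$).
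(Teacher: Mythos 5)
Your proposal is correct and follows essentially the same route as the paper: part (1) is the identical argument (PJ-standardness of $f$ for the row condition, PJ-standardness of $f^{-1}$ to rule out equal entries in a column), and part (2) rests on the same key fact that the middle-eastern reading order on $\mu$ coincides with $<_J$, so that PJ-standardness of $f^{-1}$ forces the boxes $<_P$-below a given target to be filled first. The paper merely packages this slightly differently (a separate lemma identifying $f$ with the destination map of the addition process, followed by a minimality/contradiction argument), whereas your single induction on the growing Young diagram $S_k$ via addable corners is an equivalent and if anything cleaner organization.
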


\subsection{Proof of Proposition~\ref{prop:phi}(1)}
For $f\in\Pic$, it is immediate from the definition of $\Phi$ that the
shape of $\Phi(f)$ is $\mu$.
Next, in order to see 
that $\Phi(f)$ is a Young tableau, we may show 

(a) $\Phi(f)_{i,j}\leq \Phi(f)_{i,j+1}$.\qq
(b) $\Phi(f)_{i,j}<\Phi(f)_{i+1,j}$.

By the definition of $\Phi$, one has
\[
\Phi(f)_{i,j} = f_1(i,j),\quad
\Phi(f)_{i,j+1} = f_1(i,j+1)
\]
Since $(i,j) <_P (i,j+1)$ and $f$ is a picture, 
\[
(f_1(i,j), f_2(i,j)) <_J (f_1(i,j+1), f_2(i,j+1))
\]
Then, by the definition of $<_J$ one gets
\[
\Phi(f)_{i,j} = f_1(i,j) \leq f_1(i,j+1) = \Phi(f)_{i,j+1}
\]
which shows (a).

By the definition of $\Phi$ again, one has
\[
\Phi(f)_{i,j} = f_1(i,j)\text{ and }
\Phi(f)_{i+1,j} = f_1(i+1,j).
\]
Since $(i,j) <_P (i+1,j)$ and $f$ is a picture, 
\[
(f_1(i,j), f_2(i,j)) <_J (f_1(i+1,j), f_2(i+1,j)),
\]
which implies $f_1(i,j) \leq f_1(i+1,j)$.
Here, suppose that $f_1(i,j) = f_1(i+1,j)$. 
It follows from the definition of $<_J$ that 
\[
f_2(i,j) > f_2(i+1,j).
\]
This means
\[
(f_1(i,j), f_2(i,j)) _P> (f_1(i+1,j), f_2(i+1,j)).
\]
Since $f$ is a picture, applying $f^{-1}$ to this one has 
\[
(i,j) _J> (i+1,j), 
\]
which derives a contradiction. Thus, 
one gets $f_1(i,j) < f_1(i+1,j)$, that is, 
$\Phi(f)_{i,j} < \Phi(f)_{i+1,j}$, or 
equivalently, (b).
Now, we obtain $\Phi(f)\in B(\mu)$.

\subsection{Addition and Picture}\label{add:pic}

Before showing 
Proposition~\ref{prop:phi}(2), we prepare the 
lemma as below:
\begin{lem}~\label{lem:add}
Let $f:\mu\to\nu\setminus\lambda$ be a picture and 
set 
${\rm ME}(\Phi(f)) = \MECry$.
Let $(p_k,q_k)\in\mu$ be the place of 
$\fsquare(0.4cm,i_k)$ in $\Phi(f)\in B(\mu)$ and 
$(a_k, b_k) \in \nu$ the place of the $k$-th 
addition in $\lm[i_1,\cd,i_N]$. Then we have 
$f(p_k,q_k) = (a_k, b_k)$ for any $k=1,\cd,N$.
\end{lem}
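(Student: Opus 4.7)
My plan is to unpack both sides and reduce to a row-wise monotonicity statement, then derive the latter from PJ-standardness of $f$ and of $f^{-1}$. Set $T := \Phi(f)$ and $s_k := \sharp\{\,j \leq k \mid i_j = i_k\,\}$. By the definition of addition to a Young diagram, the $k$-th added cell lies in row $a_k = i_k$ at column $b_k = \lm_{i_k} + s_k$. Moreover, by the definition of the ME reading together with the ordering of $T^{(i_k)}$ recalled after (\ref{Y-diag}), one has $s_k = p(T;p_k,q_k)$. By the definition of $\Phi$, $f_1(p_k,q_k) = T_{p_k,q_k} = i_k = a_k$, so $f(p_k,q_k)$ already lies in row $i_k$ of $\nu \setminus \lm$, and the problem reduces to showing $f_2(p_k,q_k) = \lm_{i_k} + s_k$.

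Fix $i := i_k$ and let $\{j_1 < j_2 < \cdots < j_m\} = \{\,j \mid i_j = i\,\}$. The cells $(p_{j_t}, q_{j_t})$ for $t = 1,\ldots,m$ are precisely the elements of $T^{(i)}$ listed in ME reading order, and $f$ bijects this set with the $m = \nu_i - \lm_i$ cells of row $i$ in $\nu \setminus \lm$. Hence $\{f_2(p_{j_t}, q_{j_t})\}_{t=1}^{m}$ is a permutation of $\{\lm_i + 1, \ldots, \nu_i\}$. If this permutation is strictly increasing in $t$, it must be the identity; applying this to the index $t = s_k$ (for which $j_{s_k} = k$) then yields $f_2(p_k,q_k) = \lm_i + s_k = b_k$, completing the proof.

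To prove the strict monotonicity, take $t < t'$ and set $(p,q) := (p_{j_t}, q_{j_t})$, $(p',q') := (p_{j_{t'}}, q_{j_{t'}})$; from the ordering of $T^{(i)}$ recalled after (\ref{Y-diag}) one has $p \leq p'$ and $q > q'$. If $p = p'$, then $(p',q') <_P (p,q)$, and PJ-standardness of $f$ together with injectivity gives $f(p',q') <_J f(p,q)$; since both images share first coordinate $i$, this forces $f_2(p',q') > f_2(p,q)$, as desired. If $p < p'$, the cells $(p,q)$ and $(p',q')$ need not be $P$-comparable, so I argue contrapositively: were $f_2(p,q) > f_2(p',q')$ (strict by injectivity), then $(i, f_2(p',q')) <_P (i, f_2(p,q))$, and PJ-standardness of $f^{-1}$ would yield $(p',q') \leq_J (p,q)$, which requires $p' \leq p$---contradicting $p < p'$.

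The main obstacle is this second case. When the two cells of $T^{(i)}$ lie in different rows, the $P$-order leaves them incomparable, so PJ-standardness of $f$ alone cannot compare their images. The remedy, and the reason a picture is required to be two-sided (both $f$ and $f^{-1}$ PJ-standard), is to invoke PJ-standardness of $f^{-1}$ in place of $f$.
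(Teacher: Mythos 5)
Your proof is correct and rests on the same key observation as the paper's: the picture property forces the ME-reading order of the entry-$i$ cells of $\Phi(f)$ to agree, under $f$, with the left-to-right order of row $i$ in $\nu\setminus\lambda$. The paper obtains this in one stroke by applying PJ-standardness of $f^{-1}$ to the $<_P$-chain $(i,\lambda_i+1)<_P\cdots<_P(i,\nu_i)$ (an argument that also covers your case $p=p'$, so your two-case split is avoidable), but your forward monotonicity argument is a valid rendering of the same idea.
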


\begin{ex}
For a picture $f = $
\begin{tabular}{c||c|c|c|c|c}
$\mu$ & $(1,1)$ & $(1,2)$ & $(1,3)$ & $(2,1)$ & $(2,2)$\\
\hline
$\nu \setminus \lambda$ & $(1,4)$ & $(2,3)$ & $(2,2)$ & $(3,2)$ & $(4,1)$\\
\end{tabular}

as in Example \ref{ff}, we have $\Phi(f)=\threetwo(1,2,2,3,3)$ and 
${\rm ME}(\Phi(f)) = \fsquare(0.4cm,2) \otimes 
\fsquare(0.4cm,2) \otimes \fsquare(0.4cm,1) 
\otimes\fsquare(0.4cm,3) \otimes \fsquare(0.4cm,3)$.

Now, let us see the second $\fsquare(0.3cm,i_2)
= \fsquare(0.3cm,2)$. This is added to 
the second row of $\lm$ by the addition:
$\fourthreetwo(,,,,,,2,,)$, and then 
it is placed in $(2,3)\in\nu$. 

The place of $\fsquare(0.3cm,i_2)
=\fsquare(0.3cm,2)$ in $\mu$ is $(1,2)$ and 
$f(1,2)=(2,3)$.
\end{ex}

\begin{proof}
Set $m:=i_k$. List the $m$-th row in 
$\nu \backslash \lambda$ according to the order $<_P$:
\[
(m, \lambda_m+1) <_P (m, \lambda_m+2) <_P \cdots <_P 
(m, \lambda+c_m) = (m,\nu_m).
\]
Since $f$ is a picture, one has 
\[
f^{-1}(m, \lambda_m+1) <_J f^{-1}(m, \lambda_m+2) 
<_J \cdots <_J f^{-1}(m, \nu_m).
\]
Since the middle-eastern reading follows the order 
$<_J$, 
$(p,q):=f^{-1}(m, \lambda_m+j)$ 
$(j=1,\cd,\nu_m-\lm_m)$
is added $j$-th to the $m$-th row of $\lm$, which implies that 
the entry in $(p,q)\in\mu$ is added to 
$(m, \lambda_m+j)\in\nu$.

On the other-hand  $f(p,q)=
f(f^{-1}(m, \lambda_m+j))=(m, \lambda_m+j)$, 
which completed the proof of the lemma.
\end{proof}

\subsection{Proof of Proposition~\ref{prop:phi} (2)}
Due to the definition of $\Phi$, it is easy to see that 
the number of entry $i$ $(i=1,\cd, n+1)$ is equal to 
$\nu_i-\lm_i$, which implies $\lm[i_1,\cd,i_N]=\nu$.

Writing ${\rm ME}(\Phi(f)) = \MECry$, 
let us show that $\lm[i_1,\cd,i_k]$ is a 
Young diagram for any $k$ by the induction on $k$.

In case $k = 1$. Denote $\Phi(f)$ by $T$.
Let us show that $\lambda[i_1] 
= \lambda[T_{1,\mu_1}]$ is a Young diagram.
Since $(1,\mu_1)$ is the minimum element in 
$\mu$ with respect to the order $<_J$ and 
$f$ is a picture, 
$f(1,\mu_1) = (f_1(1,\mu_1), f_2(1,\mu_1))$ must 
be minimal with respect to the order $<_P$.
Set $s:=f_1(1, \mu_1)$. Then, by Lemma \ref{lem:add} 
we have 
$f_2(1,\mu_1) = \lambda_s + 1$. 
Assume that there is no box above 
$(s, \lambda_s + 1)$ in $\lm[i_1]$.
$\nu=\lm[i_1,\cd,i_N]$ is a Young diagram, 
which means that there is some $j$ such that 
$i_j$ is added above $(s, \lambda_s + 1)$.
Since $f(1,\mu_1)$ is minimal in 
$\nu\backslash \lambda$ with respect to $<_P$, in 
the addition of ${\rm ME}(\Phi(f))$ to $\lm$, 
nothing is added 
above $(s, \lambda_s + 1)$ after $i_1$,
which derives a contradiction. Then, we know that 
there is originally a box above $(s, \lambda_s + 1)$ and
then shows 
\[
\lambda_{s-1} - \lambda_s > 0.
\]
Therefore, $\lambda[i_1]$ is a Young diagram.

In case $k = m>1$. 
Suppose $\lm':=\lambda[i_1,i_2, \cdots, i_{m-1}]$
to be a Young diagram and set $i_m:= T_{x,y}$, namely, 
$i_m$ is the $(x,y)$-entry in $T$.
By considering similarly to the case $k=1$, 
$f(x,y)$ must be  minimal in 
$\nu\backslash \lambda'$ with respect to the 
order $<_P$.
By Lemma \ref{lem:add}, the destination 
of $i_m$ by the addition is 
$f(x,y) = (i_m, \lambda'_{i_m} + 1)$.
Then, nothing comes above $f(x,y)$ after $i_m$. 
Thus, by arguing similarly to the case $k=1$, 
we have 
\[
\lambda'_{i_m-1}  - \lambda'_{i_m} > 0,
\]
and then $\lambda'[i_m]$ is a Young diagram.
\qed

%%%%%%%%%%%%%%% Section 6 %%%%%%%%%%%%%%%%
\renewcommand{\thesection}{\arabic{section}}
\section{Well-definedness of $\Psi$}
\setcounter{equation}{0}
\renewcommand{\theequation}{\thesection.\arabic{equation}}

In this section, we shall show 
the well-definedness of $\Psi$, 
that is, the image $\Psi(\Cry)$ is in $\Pic$.
Let $\lm,\mu,\nu$ be as above.
\begin{pro}\label{prop:psi}
For $T\in\Cry$, we have 
\begin{enumerate}
\item
$\Psi(T)$ is a map from $\mu$ to 
$\nu\backslash \lambda$ and $\Psi(T)(\mu)=\nu\setminus\lm$.
\item
$\Psi(T)$ is a bijection.
\item
Both $\Psi(T)$ and $\Psi(T)^{-1}$ are 
PJ-standard.
\end{enumerate}
\end{pro}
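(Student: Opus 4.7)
The plan is to verify the three assertions in order, drawing on the injectivity identity (\ref{ijxy}), the anti-staircase structure of $T^{(k)}$ from the remark following (\ref{Y-diag}), and the defining condition of $\Cry$ that every partial addition $\lambda[i_1,\ldots,i_k]$ is a Young diagram.

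For (1), fix $(i,j)\in\mu$ and set $m:=T_{i,j}$, $p:=p(T;i,j)$. The total count of $m$-entries in $T$ equals $\nu_m-\lambda_m$, since $\lambda[i_1,\ldots,i_N]=\nu$ and each $m$-addition increments precisely row $m$. Thus $1\leq p\leq \nu_m-\lambda_m$, so $(m,\lambda_m+p)\in\nu\setminus\lambda$. For (2), injectivity is immediate from (\ref{ijxy}): $\Psi(T)(i,j)=\Psi(T)(i',j')$ forces $T_{i,j}=T_{i',j'}$ and $p(T;i,j)=p(T;i',j')$, hence $(i,j)=(i',j')$. Combined with $|\mu|=|\nu\setminus\lambda|$, this gives bijectivity and in particular $\Psi(T)(\mu)=\nu\setminus\lambda$.

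The PJ-standardness of $\Psi(T)$ follows directly from the tableau conditions on $T$. Given $(i,j)\leq_P(i',j')$: if $i<i'$, the column-strict/row-weak monotonicity yields $T_{i,j}<T_{i',j}\leq T_{i',j'}$, so the first coordinates of $\Psi(T)(i,j)$ and $\Psi(T)(i',j')$ are strictly ordered and $\leq_J$ holds automatically; if $i=i'$ and $j<j'$, then either $T_{i,j}<T_{i,j'}$ (same conclusion) or $T_{i,j}=T_{i,j'}$, in which case the anti-staircase listing of $T^{(T_{i,j})}$ forces $p(T;i,j)>p(T;i,j')$, making the second coordinates decrease as required by $\leq_J$.

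The main obstacle is the PJ-standardness of $\Psi(T)^{-1}$. Suppose $(a,b):=\Psi(T)(i,j)\leq_P(c,d):=\Psi(T)(i',j')$. The case $a=c$ reduces again to the anti-staircase listing of $T^{(a)}$: $b\leq d$ gives $p(T;i,j)\leq p(T;i',j')$, so the index of $(i,j)$ precedes that of $(i',j')$ in the listing, yielding $i\leq i'$ and, when $i=i'$, $j\geq j'$. The crux is the case $a<c$, which I will argue by contradiction. Assume $(i,j)>_J(i',j')$, so in the middle-eastern reading $(i',j')$ precedes $(i,j)$; let $k$ be the ME-position of $(i',j')$. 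Then $\lambda^{(k)}:=\lambda[i_1,\ldots,i_k]$ satisfies $\lambda^{(k)}_c=\lambda_c+p(T;i',j')=d$, and since $T\in\Cry$, $\lambda^{(k)}$ is a Young diagram, forcing $\lambda^{(k)}_a\geq\lambda^{(k)}_c=d\geq b$. But $\lambda^{(k)}_a-\lambda_a$ counts the $a$-entries among $i_1,\ldots,i_k$, which is at most $p(T;i,j)-1$ because $(i,j)$ has not yet been read; so $\lambda^{(k)}_a\leq b-1$, a contradiction. This step is exactly where the defining condition of $\Cry$, namely that every partial addition is a Young diagram, is used in a genuinely essential way.
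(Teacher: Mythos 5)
Your proof is correct, and for parts (1), (2), and the PJ-standardness of $\Psi(T)$ itself it essentially coincides with the paper's argument: injectivity via (\ref{ijxy}), surjectivity from the cardinality count, and a case split on whether the relevant entries of $T$ are equal, resolved by the anti-staircase listing of $T^{(k)}$ from the remark after (\ref{Y-diag}). The one place you genuinely diverge is the PJ-standardness of $\Psi(T)^{-1}$ when the first coordinates differ. The paper reduces to the adjacent pair $(a,b)$, $(a+1,b)$ and argues qualitatively via Lemma \ref{lem:tabad}: a box can only be appended below a box that is already present, so the preimage of $(a,b)$ is read before that of $(a+1,b)$, and the middle-eastern reading respects $\leq_J$. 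You instead run a quantitative contradiction for arbitrary $\leq_P$-comparable pairs, comparing the row lengths $\lambda^{(k)}_a$ and $\lambda^{(k)}_c$ of the partial shape at the moment the lower-right box is placed. Both arguments hinge on the same essential ingredient --- that every partial addition $\lambda[i_1,\ldots,i_k]$ is a Young diagram --- but your version makes explicit the counting that the paper compresses into the phrase ``$T_{i,j}$ is added earlier than $T_{s,t}$'', and it dispenses with the (easy but unstated) reduction of general $\leq_P$-comparable pairs in $\nu\setminus\lambda$ to chains of adjacent ones. The trade-off is purely one of granularity: the paper's route is shorter once Lemma \ref{lem:tabad} is in hand, while yours is more self-contained at the critical step.
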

Before starting the proof, we prepare one lemma:
\begin{lem}\label{lem:tabad}
For $T \in \Cry$ and $(i,j)\in\mu$, 
define $(p,q):=\Psi(T)(i,j)$. 
Then we have 
that the destination of $(i,j)$ by 
the addition of ${\rm ME}(T)$ is equal to $(p,q)$.
\end{lem}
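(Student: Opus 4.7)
Fix $(i,j)\in\mu$ and set $k:=T_{i,j}$ and $p:=p(T;i,j)$, so that by the very definition of $\Psi$ we have $\Psi(T)(i,j)=(k,\lambda_k+p)$. The task is therefore to check that the letter of ${\rm ME}(T)$ contributed by the cell $(i,j)$ of $T$ ends up in the box $(k,\lambda_k+p)$ when the letters of ${\rm ME}(T)$ are added one by one to $\lambda$.

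The plan is to do this in two short steps. First, I would reconcile the middle-eastern order with the enumeration of $T^{(k)}$ recalled in the remark after (\ref{Y-diag}). By the definition of ${\rm ME}$, the reading scans each row from right to left and the rows from top to bottom. Within a single row of $T$ the entries equal to $k$ form one contiguous block (rows are weakly increasing), so reading right-to-left in that row visits them in order of strictly decreasing column; across rows, cells in an upper row are visited before cells in a lower row. This is exactly the listing $(a_1,b_1),(a_2,b_2),\ldots,(a_m,b_m)$ with $a_1\leq a_2\leq\cdots$ and $b_1>b_2>\cdots$ given in the remark. Hence, since $(i,j)=(a_p,b_p)$, the letter that the cell $(i,j)$ contributes to ${\rm ME}(T)$ is precisely the $p$-th occurrence of $k$ in the sequence ${\rm ME}(T)$.

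Second, I would use the definition of addition: $\lambda[\ell]$ alters only the $\ell$-th component by appending one box to the right end of row $\ell$, and only letters equal to $k$ affect row $k$. So just before the $p$-th occurrence of $k$ is processed, the $k$-th row has length $\lambda_k+(p-1)$, and processing that occurrence places the new box at column $\lambda_k+p$ of row $k$. Combining this with the previous paragraph, the destination of $(i,j)$ under the addition of ${\rm ME}(T)$ is $(k,\lambda_k+p)$, which is exactly $\Psi(T)(i,j)=(p,q)$, as required.

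The main obstacle I anticipate is purely notational: one has to match the right-to-left, top-to-bottom convention of ${\rm ME}$ with the ordering $a_1\leq a_2\leq\cdots$, $b_1>b_2>\cdots$ of $T^{(k)}$ carefully, and verify that the contiguity of $k$'s within a row really does make these two orderings coincide. Once that combinatorial identification is made, the column-count argument on row $k$ is a one-line bookkeeping step.
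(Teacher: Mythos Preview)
Your proposal is correct and follows essentially the same approach as the paper's own proof: set $m=T_{i,j}$, observe that $(i,j)$ is the $p$-th element of $T^{(m)}$ (in the ordering of the remark after (\ref{Y-diag})), so it is added $p$-th to row $m$ and lands at $(m,\lambda_m+p)=\Psi(T)(i,j)$. Your write-up simply spells out more carefully why the middle-eastern reading visits the cells of $T^{(m)}$ in exactly the order $(a_1,b_1),(a_2,b_2),\ldots$, which the paper leaves implicit; note, incidentally, that the contiguity of the $k$'s within a row is not actually needed for this---the right-to-left scan within a row already gives decreasing columns regardless.
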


\begin{proof}
Set $m:=T_{i,j}$ and let $(i,j)$ be the 
$p$-th element in $T^{(m)}$ from the right, where 
$T^{(m)}$ is as in (\ref{Y-diag}).
Then, by the addition, 
$T_{i,j}$ is added $p$-th to the $m$-th row in $\nu$.
By the definition of $\Psi$, one has 
$\Psi(T)(i,j) = (m,\lambda_m+p)$. This shows the lemma.
\end{proof}

\subsection{Proof of Proposition~\ref{prop:psi} (1)}

It is clear from the definition of $\Psi$ that 
$\Psi(T)$ is a map from $\mu$.
Since $T \in \Cry$, 
one has that for any $j=1\cd,n$ 
the number of $j$ 
in $T$ is equal to $\nu_j-\lm_j$. Then it follows from Lemma 
\ref{lem:tabad} that 
$\Psi(T)(\mu)=\nu\backslash \lambda$.
Thus, we have (1).

\subsection{ Proof of proposition~\ref{prop:psi} (2)}

Since $|\mu| = |\nu\setminus\lambda|$ and $\Psi(T)=\nu\setminus\lm$ by
Proposition~\ref{prop:psi} (1), it suffices to show that 
$f:=\Psi(T)$ is injective.
By the definition of $\Psi$, for $(i,j), (x,y)\in \mu$ there are some 
$p$ and $q$ such that 
\[
f(i,j) = (T_{i,j}, \lambda_{T_{i,j}}+p),\qq
f(x,y) = (T_{x,y}, \lambda_{T_{x,y}}+q).
\]
Indeed, $p=p(T;,i,j)$ and $q=p(T;x,y)$.
Suppose that $f(i,j) = f(x,y)$. One has
\[
T_{i,j} = T_{x,y},\quad
\lambda_{T_{i,j}}+p = \lambda_{T_{x,y}}+q.
\]
Then $p = q$. Hence, by (\ref{ijxy}) 
one has $(i,j) = (x,y)$ and then $f$ is injective.

\subsection{Proof of Proposition~\ref{prop:psi} (3)}

First, let us see $f=\Psi(T)$ to be PJ-standard.
For the purpose, we may show for any $(i,j)\in\mu$,

(a) $f(i,j)<_J f(i,j+1)$. \q (b) $f(i,j)<_J f(i+1,j)$. 

\noindent
(a) For $(i,j), (i,j+1)\in\mu$, 
there are some $p$ and $q$ such that
\[
f(i,j) = (T_{i,j}, \lambda_{T_{i,j}}+p), \qq
f(i,j+1) = (T_{i,j+1}, \lambda_{T_{i,j+1}}+q).
\]
Since $T$ is a Young tableau, one has
\[
T_{i,j} \leq T_{i,j+1}.
\]
If $T_{i,j} < T_{i,j+1}$, this implies $f(i,j) <_J f(i,j+1)$ and then 
there is nothing to show.
So, assume $T_{i,j} = T_{i,j+1}=:m$. 
In this case, $(i,j), (i,j+1)\in T^{(m)}$ and they are neighboring each other.
Thus, we have $p=q+1$ and then 
\[
\lambda_{T_{i,j}}+p > \lambda_{T_{i,j+1}}+q.
\]
This shows
$f(i,j) <_J f(i,j+1)$.

\noindent
(b)
For $(i,j), (i+1,j)\in \mu$, there are some $p$ and $r$ such that 
\[
f(i, j) = (T_{i,j}, \lambda_{T_{i,j}}+p), \qq
f(i+1, j) = (T_{i+1,j}, \lambda_{T_{i+1,j}}+r).
\]
Since $T$ is a Young tableau, we have 
\[
T_{i,j} < T_{i+1,j},
\]
which means $f(i,j) <_J f(i+1,j)$
and then $f$ is PJ-standard.
\vspace{0.5cm}

Next, let us show $f^{-1}$ to be PJ-standard.
It is sufficient to see that for $(a, b), (a, b+1), (a+1,b)\in 
\nu \backslash \lambda$:
\[
\text{(c)}\,\, f^{-1}(a,b)<_J  f^{-1}(a,b+1).\qq  
\text{(d)}\,\,f^{-1}(a,b)<_J  f^{-1}(a+1,b).
\]
Set
\[
(i,j):= f^{-1}(a,b),\quad (x,y):= f^{-1}(a,b+1), \quad (s,t):=
f^{-1}(a+1,b).
\]
(c) There exist $p$ and $q$ such that 
\[
(a, b) = f(i,j) = (T_{i,j}, \lambda_{T_{i,j}}+p),\qq
(a,b+1) = f(x,y) = (T_{x,y}, \lambda_{T_{x,y}}+q).
\]
Thus, we have 
\[
T_{i,j} = T_{x,y} = a,\qq
\lambda_a+p = b,\qq
\lambda_a+q = b+1,
\]
which implies $q=p+1$. Then we know that 
$(i,j)$ and $(x,y)$ are neighboring in $T^{(a)}$ and then
$i = x$ and $j > y$, or $i < x$. 
Therefore, 
\[
f^{-1}(a,b) = (i,j) <_J (x,y) = f^{-1}(a,b+1),
\]
and then we show (c).

\noindent
(d) 
There is $(a,b)$ just above $(a+1,b)$ in the same column in 
$\nu\backslash \lambda$.
It follows from Lemma \ref{lem:tabad} that in the addition of ${\rm ME}(T)$, 
$T_{i,j}$ is added earlier than $T_{s,t}$.
Since the middle-eastern reading follows the order $<_J$, 
we have 
\[
f^{-1}(a,b) = (i,j) <_J (s,t) = f^{-1}(a+1,b),
\]
which implies (d). Hence, both $f$ and $f^{-1}$ are PJ-standard and then
$f=\Psi(T)\in \Pic$. Now, we have completed the proof of Proposition 
\ref{prop:psi}. \qed

%%%%%%%%%%%%%%% Section 7 %%%%%%%%%%%%%%%%
\renewcommand{\thesection}{\arabic{section}}
\section{Bijectivity of $\Phi$ and $\Psi$}
\setcounter{equation}{0}
\renewcommand{\theequation}{\thesection.\arabic{equation}}

In order to show 
$\Phi$ and $\Psi$ to be bijective, we shall prove
\[
(e)\,\,\Psi \circ \Phi = \text{id}_{\Pic}.\qq 
(f)\,\,\Phi \circ \Psi = \text{id}_{\Cry}.
\]

\noindent
(e) 
For $f=(f_1,f_2)\in \Pic$, set $g:=\Psi \circ \Phi(f)$.
$\Phi(f)$ is a Young tableau whose $(s,t)$-entry  
$\Phi(f)_{s,t}$ is equal to $f_1(s,t)$.
Let $m:=\Phi(f)_{s,t}$ be the $p$-th entry from the right in 
$\Phi(f)^{(m)}$ and then
\begin{equation*}
g(s,t) = (\Phi(f)_{s,t}, \lambda_{\Phi(f)_{s,t}}+p)
=(f_1(s,t), \lambda_{f_1(s,t)}+p).
\end{equation*}

We can easily see from Lemma~\ref{lem:add} that 
$f(s,t) = (\Phi(f)_{s,t} ,\lambda_{\Phi(f)_{s,t}}+p)
=(f_1(s,t), \lambda_{f_1(s,t)}+p)$.
Hence, we have $g=f$ and then $\Psi \circ \Phi =$ id$_\Pic$.

\vspace{4mm}
\noindent
(f) 
Take $T\in \Cry$. 
By the definition of $\Psi$, $\Psi(T)$ is a map which 
sends $(i, j)$ to $(T_{i,j},  \nu_{T_{i,j}}+p)$, 
where $p=p(T;i,j)$.
Furthermore, by the definition of $\Phi$, 
$\Phi \circ \Psi(T)$ is a Young tableau in the shape $\mu$ 
with a entry $T_{i,j}$
in a box $(i, j)$.
This means $ T = \Phi \circ \Psi(T)$ and then 
$\Phi \circ \Psi= \text{id}_{\Cry}$. 

Now, we have completed the proof of Theorem \ref{main}.\qed

\begin{ex}
Set 
$f:=$
\begin{tabular}{c||c|c|c|c|c}
$\mu$ & $(1,1)$ & $(1,2)$ & $(1,3)$ & $(2,1)$ & $(2,2)$\\
\hline
$\nu \setminus \lambda$ & $(1,4)$ 
& $(2,2)$ & $(4,1)$ & $(2,3)$ & $(3,2)$\\
\end{tabular}\\
$\in \Pic$.
We have 
$\Phi(f) = \threetwo(1,2,4,2,3)$.
Let us apply $\Psi$ to this. The number of entries 
$1,3,4$ in $\Phi(f)$ is one and then their destinations are
determined uniquely:
$1\mapsto (1,4)$, 
$3\mapsto(3,2)$ and $4\mapsto(4,1)$. 
There two entries $2$ in $\Phi(f)$. 
Since $2$ in $(1,2)$ is right to the one in $(2,1)$, it goes to 
$(2,2)$ and the other goes to $(2,3)$.
Hence we have, 

$\Psi \circ \Phi(f) =$
\begin{tabular}{c||c|c|c|c|c}
$\mu$ & $(1,1)$ & $(1,2)$ & $(1,3)$ & $(2,1)$ & $(2,2)$\\
\hline
$\nu \setminus \lambda$ & $(1,4)$ & $(2,2)$ & $(4,1)$ 
& $(2,3)$ & (3,2)\\
\end{tabular}
$=f$.\\
This shows $\Psi \circ \Phi =$ id$_{\Pic}$.
\end{ex}

\begin{ex}
Set $T:= \threetwo(1,2,4,2,3) \in \Cry$. 
We have \\
$\Psi(T) =$
\begin{tabular}{c||c|c|c|c|c}
$\mu$ & $(1,1)$ & $(1,2)$ & $(1,3)$ & $(2,1)$ & $(2,2)$\\
\hline
$\nu \setminus \lambda$ & $(1,4)$ 
& $(2,2)$ & $(4,1)$ & $(2,3)$ & (3,2)\\
\end{tabular}.\\
By the definition of $\Phi$, we obtain:
$\Phi \circ \Psi(T) =\threetwo(1,2,4,2,3)$. Hence, 
$\Phi \circ \Psi= \text{id}_{\Cry}$

\end{ex}

%%%%%%%%%%%%%%% Section  %%%%%%%%%%%%%%%%
\renewcommand{\thesection}{\arabic{section}}
\section{Conjecture}
\setcounter{equation}{0}
\renewcommand{\theequation}{\thesection.\arabic{equation}}

We define a total order on a subset $X$ 
in $\bbN\times \bbN$, called 
``{\it admissible order}'' and denoted by $<_A$.
\begin{df}
\begin{enumerate}
\item
A total order $<_A$ on $X\subset\bbN\times\bbN$ 
is called $admissible$
if it satisfies:
\[
\text{For any }(a,b),\,\,(c,d)\in X\text{ if }
a\leq c\text{ and }b\geq d\text{ then } (a,b)<_A (c,d).
\]
\item
For $X,Y\subset\bbN\times\bbN$ and a map $f:X\to Y$, if $f$ satisfies
that if $(a,b)<_P(c,d)$, then $f(a,b)<_A f(c,d)$ for 
any $(a,b),~(c,d)\in X$, then $f$ is called PA-standard.
\end{enumerate}
\end{df}
{\sl Remark.} Note that for fixed $X\subset\bbN\times\bbN$, there 
can be several admissible orders on $X$. For example, 
the order $<_J$ is one of admissible orders on $X$. 
If we define the total order $<_F$ by 
\[
 (a,b)<_F (c,d) \text{ iff }b>d, \text{ or }b=d \text{ and }a<c, 
\]
then this is also admissible.

Let $\lm,\mu,\nu$ be Young diagrams as above and 
$<_A$ (resp. $<_{A'}$) an admissible order on $\nu\setminus\lm$
(resp. $\mu$). 
Note that we do not assume $<_A~=~<_{A'}$.
We define a set $(A,A')$-pictures 
${\bf P}(\mu,\nu\setminus\lm:A,A')$ by 
\[
{\bf P}(\mu,\nu\setminus\lm:A,A')
:=\left\{f:\mu\to\nu\setminus \lm~|~\begin{tabular}{l}\hbox{$f$ is PA-standard and
 bijective,} \\ \hbox{and $f^{-1}$ is PA$'$-standard.}\end{tabular}
\right\}.
\]
\begin{df}
Let $A$ be an admissible order on a Young diagram $\mu$ with $|\mu|=N$.
For $T\in B(\mu)$, by reading the entries in $T$ according to $A$, 
we obtain the map 
\[
 R_A:B(\mu)\longrightarrow B^{\ot N}\q (T\mapsto
\fsq(i_1)\ot\cd\ot\fsq(i_N))),
\]
which is called an admissible reading associated with the order $A$.
It is known that the map $R_A$ is an embedding of crystals(\cite{HK}).
\end{df}
Here note that Theorem \ref{LRrule} is valid for an arbitrary reading 
$R_A$, that is, in (\ref{LR}) we can replace ${\rm FE}(T)$ with 
$R_A(T)$. Define 
\[
\mathbf{B}(\mu)_\lm^\nu[A]
:= 
\left\{T\in{\mathbf B}(\mu)|
\begin{array}{l} 
R_A(T)=\MECry, \\
\text{for any }k=1,\cd,N,\\
\lm[i_1,\cd,i_k]\text{ is a Young diagram and}\\
\lm[i_1,\cd,i_N]=\nu.
\end{array}
\right\},
\]
It is shown in \cite{HK} that for any admissible order on $\mu$,
\begin{equation}
\mathbf{B}(\mu)_\lm^\nu[A]=\Cry.
\label{crya=cry}
\end{equation}
\begin{con}
Let $A$ (resp. $A'$) be an admissible order on 
$\nu\setminus\lm$ (resp. $\mu$). 
There exists a bijection 
\[
 \Psi:\mathbf{B}(\mu)_\lm^\nu[A']\longrightarrow
{\bf P}(\mu,\nu\setminus\lm:A,A'),
\]
where $\Psi$ is the same as in 4.1. 
\end{con}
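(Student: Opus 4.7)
The approach is to reduce the conjecture to the Main Theorem by identifying the target set with $\Pic$.

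First, by equation (\ref{crya=cry}), the domain $\mathbf{B}(\mu)^\nu_\lm[A']$ coincides with $\Cry$ as a set, independently of the chosen admissible order $A'$. Combined with Theorem \ref{main}, this shows that the map $\Psi$ already furnishes a bijection from $\mathbf{B}(\mu)^\nu_\lm[A']$ onto $\Pic = {\bf P}(\mu,\nu\setminus\lm:J,J)$. Hence the conjecture is equivalent to the set equality
\[
\Pic = {\bf P}(\mu,\nu\setminus\lm:A,A'),
\]
for any admissible orders $A$ on $\nu\setminus\lm$ and $A'$ on $\mu$.

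The key to this equality is a rigidity property of pictures: for any $f\in\Pic$ and any $(x,y)\leq_P(z,w)$ in $\mu$, the images satisfy $f_1(x,y)\leq f_1(z,w)$ \emph{and} $f_2(x,y)\geq f_2(z,w)$; and symmetrically for $f^{-1}$ on $\leq_P$-comparable pairs in $\nu\setminus\lm$. Since every admissible order is by definition a linear extension of the ``northeast'' partial order $(a,b)\leq_{NE}(c,d)\iff a\leq c,\ b\geq d$, any two admissible orders agree on $\leq_{NE}$-comparable pairs and differ only on $<_P$-comparable pairs. Therefore, once rigidity is in hand, the conditions PA-standard (resp. PA$'$-standard) and PJ-standard become equivalent, and the desired set equality follows. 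The same argument applied to a $(A,A')$-picture yields the reverse inclusion.

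To prove rigidity, I would argue by contradiction, in the style of Sections 5 and 6. Given $f\in\Pic$ and $(x,y)<_P(z,w)$, set $(a,b):=f(x,y)$, $(c,d):=f(z,w)$, and suppose $a<c$ and $b<d$. One first verifies that the ``intermediate'' box $(a,d)$ lies in $\nu\setminus\lm$, using $d\leq\nu_a$ (from the Young condition on $\nu$) and $d>b>\lm_a$. Set $(u,v):=f^{-1}(a,d)$; applying $f^{-1}$ being PJ-standard to the $\leq_P$-comparisons $(a,b)\leq_P(a,d)$ and $(a,d)\leq_P(c,d)$ gives $(x,y)<_J(u,v)<_J(z,w)$, which forces $u=x$ or $u=z$ (or some intermediate row). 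A case analysis, exploiting the semistandardness of $T=\Phi(f)$ and the combinatorics of $p(T;\cdot,\cdot)$ from (\ref{pT}), rules out $u=z$ (via the strict column-increase in $T$) and forces $u=x$, $v<y$. In parallel, one examines whether $(c,b)\in\nu\setminus\lm$ and extracts similar constraints from its preimage. Combining these, a contradiction with $T$ being a semistandard tableau in $\Cry$ emerges.

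\textbf{Main obstacle.} The technical heart is the rigidity proof: the case analysis depends subtly on whether the boundary of $\lm$ separates the intermediate images $(a,d)$ and $(c,b)$ from $\nu\setminus\lm$, and the contradictions rely on the interplay between $\Phi$, the counting function $p(T;\cdot,\cdot)$, and the LR condition that each $\lm[i_1,\ldots,i_k]$ is a Young diagram. Once rigidity is established, the conjecture follows immediately by the cardinality identification outlined in the first paragraph.
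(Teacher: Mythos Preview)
This statement is labelled a \emph{conjecture} in the paper, and the paper does not prove it. What the paper does is observe that the conjecture implies the Corollary $\Pic={\bf P}(\mu,\nu\setminus\lm:A,A')$, and remarks that this Corollary is already known from \cite{CS2} and \cite{FG}.

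Your reduction goes further than the paper and is the most valuable part of your proposal: using $\mathbf{B}(\mu)^\nu_\lm[A']=\Cry$ from (\ref{crya=cry}) together with Theorem~\ref{main}, you correctly argue that the conjecture is \emph{equivalent} to the Corollary, not merely implied by it. Since the Corollary is established in \cite{CS2} and \cite{FG}, your reduction alone already settles the conjecture. The paper does not draw this conclusion.

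Your attempt to reprove the Corollary directly via the ``rigidity'' property is in the right spirit (this is essentially how \cite{FG} proceed), but the sketch has real gaps. For a horizontal step $(x,y)<_P(x,y+1)$ the argument is clean: if $f(x,y)=(a,b)$, $f(x,y+1)=(c,d)$ with $a<c$ and $b<d$, then $(a,b)<_P(c,d)$ forces $(x,y)<_J(x,y+1)$, which is false. But for a vertical step $(x,y)<_P(x+1,y)$ this contradiction does not arise, and your ``case analysis'' involving $(a,d)$, $(c,b)$, the tableau $T=\Phi(f)$, and $p(T;\cdot,\cdot)$ is only gestured at; the constraints you list do not obviously close. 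More seriously, for the reverse inclusion you assert that ``the same argument applied to a $(A,A')$-picture yields the reverse inclusion'', but your sketch uses specific features of $<_J$ (e.g.\ that $<_J$ totally orders each row), and it is not clear it transfers to an arbitrary admissible order. The clean way through is to prove, for \emph{any} admissible $A,A'$, that a bijection with $f$ PA-standard and $f^{-1}$ PA$'$-standard necessarily sends $<_P$-comparable pairs to $\leq_{NE}$-comparable pairs; once that order-theoretic characterisation is in hand it is independent of $A,A'$ and the set equality follows at once. That is what needs to be filled in if you want a self-contained proof rather than citing \cite{CS2},\cite{FG}.
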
 
If we show the conjecture, 
together with (\ref{crya=cry}), we have 
\begin{cor}
For arbitrary admissible orders $A$ on $\nu\setminus\lm$ and 
$A'$ on $\mu$,
\[
\Pic= {\bf P}(\mu,\nu\setminus\lm:A,A').
\]
\end{cor}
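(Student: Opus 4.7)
The plan is to deduce the conjecture from Theorem~\ref{main} together with a sharper column-monotonicity property of the map $\Psi$. By (\ref{crya=cry}) the domain $\mathbf{B}(\mu)_\lm^\nu[A']$ equals $\Cry$, and by Theorem~\ref{main} the map $\Psi:\Cry\to\Pic$ is already a bijection onto the ordinary $(J,J)$-pictures $\Pic$. Since $<_J$ is itself admissible, every $(A,A')$-picture is in particular an ordinary picture, giving $\mathbf{P}(\mu,\nu\setminus\lm:A,A')\subseteq\Pic$. Thus it suffices to show the reverse inclusion: for every $T\in\Cry$, the map $f:=\Psi(T)$ is PA-standard on $\nu\setminus\lm$ and $f^{-1}$ is PA$'$-standard on $\mu$.

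Write $g(i,j):=\lm_{T_{i,j}}+p(T;i,j)$, so that $f(i,j)=(T_{i,j},g(i,j))$. The key is the following column-monotonicity lemma: for $(i,j)<_P(x,y)$ in $\mu$, one has $T_{i,j}\leq T_{x,y}$ and $g(i,j)\geq g(x,y)$. The admissibility of $A$ states precisely that $a\leq c$ together with $b\geq d$ (for distinct cells) forces $(a,b)<_A(c,d)$, so the lemma immediately yields PA-standardness of $f$. The first inequality $T_{i,j}\leq T_{x,y}$ is a Young tableau condition on $T$. For $g(i,j)\geq g(x,y)$, since $\mu$ is a Young diagram one can connect $(i,j)$ to $(x,y)$ by a $<_P$-staircase and reduce to two consecutive cases. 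The right-neighbor case $(x,y)=(i,j+1)$ is immediate from Lemma~\ref{lem:add}: writing $s_1,s_2$ for the ME-positions of $(i,j),(i,j+1)$, one has $s_1=s_2+1$, and Youngness of $\lm^{(s_2-1)}$ yields $\lm^{(s_2-1)}_{T_{i,j}}\geq\lm^{(s_2-1)}_{T_{i,j+1}}$, which translates directly to $g(i,j)\geq g(i,j+1)$.

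The down-neighbor case $(x,y)=(i+1,j)$ is the main obstacle. Set $m=T_{i,j}$, $m'=T_{i+1,j}$, and assume $g(i,j)<g(i+1,j)=:d$ for contradiction. Then the cells $(m,d),(m{+}1,d),\ldots,(m',d)$ all lie in $\nu\setminus\lm$, since $d>\lm_m\geq\lm_k$ for $k\geq m$, and Lemma~\ref{lem:add} together with PJ-standardness of $f^{-1}$ forces the preimages $(u_k,v_k):=f^{-1}(k,d)$ to satisfy $u_m<u_{m+1}<\cdots<u_{m'}=i+1$. If $m'>m+1$ this gives $u_m\leq i-1$, contradicting $u_m\geq i$, which follows from the $T^{(m)}$-ordering comparing $(u_m,v_m)$ and $(i,j)$. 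If $m'=m+1$, I iteratively reduce to the leftmost column $c_m^i$ of row $i$ with $T_{i,c_m^i}=m$: at the base cell $(i,c_m^i)$ the same chain argument leaves no room for another $m$-preimage in row $i$ to the left, so the base inequality $g(i,c_m^i)\geq g(i+1,c_m^i)$ holds, and the linear identity $g(i,k)-g(i,k')=k'-k$, valid on the row-$i$ interval where $T_{i,\cdot}=m$ (and analogously on the row-$(i{+}1)$ block of $T^{(m+1)}$), then transfers this inequality to $(i,j)$ by subtraction.

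Granted column monotonicity, PA$'$-standardness of $f^{-1}$ is formal. For $(a,b)<_P(c,d)$ in $\nu\setminus\lm$ and $(i,j)=f^{-1}(a,b)$, $(x,y)=f^{-1}(c,d)$, Theorem~\ref{main} gives $i\leq x$, and the case $i=x$ already yields $j>y$, so the only remaining case is $i<x$ and $j<y$. There the path $(i,j)<_P(i,y)<_P(x,y)$ lies in $\mu$, and two applications of column monotonicity yield $b=g(i,j)\geq g(i,y)\geq g(x,y)=d$; combined with the hypothesis $b\leq d$ this forces $b=g(i,y)=d$. The vertical chain $u_a<u_{a+1}<\cdots<u_c$ in $\nu\setminus\lm$, with $u_a=i$ and $(i,y)$ identified as $(u_{a'-a},v_{a'-a})$ where $a':=T_{i,y}$, then forces $a'=a$, whence $T_{i,y}=a=T_{i,j}$ and the row-$i$ identity $g(i,j)=g(i,y)+(y-j)$ collapses only when $y=j$, a contradiction. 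The sole genuinely hard step is the down-neighbor case of column monotonicity, and that is the only place where the LR (lattice-word) condition genuinely enters, via the Young-diagram property of the intermediate $\lm^{(s)}$.
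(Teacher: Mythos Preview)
Your argument contains a genuine gap at the very first step. You claim that ``since $<_J$ is itself admissible, every $(A,A')$-picture is in particular an ordinary picture,'' giving $\mathbf{P}(\mu,\nu\setminus\lm:A,A')\subseteq\Pic$. This is false as stated: the fact that $<_J$ is admissible does \emph{not} imply that PA-standardness for some fixed admissible $A$ entails PJ-standardness. Admissible orders agree only on pairs $(a,b),(c,d)$ with $a\leq c$ and $b\geq d$; on pairs with $a<c$ and $b<d$ they may disagree, so $f(i,j)<_A f(x,y)$ gives no information about $f(i,j)<_J f(x,y)$ in that region. Thus the inclusion $\mathbf{P}(\mu,\nu\setminus\lm:A,A')\subseteq\Pic$ is precisely the half you have not proved, and without it you only obtain an injection $\Psi:\Cry\hookrightarrow\mathbf{P}(\mu,\nu\setminus\lm:A,A')$, not a bijection. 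To close the gap you would need an analogue of Proposition~\ref{prop:phi} showing that $\Phi$ is well-defined on $\mathbf{P}(\mu,\nu\setminus\lm:A,A')$ for arbitrary admissible $A,A'$, together with the corresponding version of Lemma~\ref{lem:add}.

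The remainder of your argument --- the column-monotonicity lemma and its dual for $f^{-1}$ --- is correct and does establish the inclusion $\Pic\subseteq\mathbf{P}(\mu,\nu\setminus\lm:A,A')$. Two remarks: (i) your down-neighbour case is unnecessarily intricate; using the far-eastern reading (legitimate by (\ref{crya=cry})) the cells $(i,j)$ and $(i+1,j)$ are consecutive, and Youngness of the intermediate shape gives $g(i,j)\geq g(i+1,j)$ in one line; (ii) in the $f^{-1}$ argument you write $(u_{a'-a},v_{a'-a})$ where you mean $(u_{a'},v_{a'})$.

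Finally, note that the paper itself does \emph{not} prove this corollary: it is stated as a formal consequence of the Conjecture, which is left open, and the result is attributed to \cite{CS2} and \cite{FG}. So there is no ``paper's own proof'' to compare against; your attempt is an independent (and, with the gap above, incomplete) approach.
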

This has been shown in \cite{CS2} and \cite{FG} by some purely combinatorial
way.

\end{document}